\newcommand{\R}{\mathbb R}
\newcommand{\tres}{|\!|\!|}
\newcommand{\peq}{\hspace*{0.10in}}
\newcommand{\quebra}{\hspace*{-0.25in}}
\newtheorem{theorem}{Theorem}[section]
\newtheorem{proposition}[theorem]{Proposition}
\newtheorem{remark}[theorem]{Remark}
\newtheorem{lemma}[theorem]{Lemma}
\begin{document}
\vglue-1cm \hskip1cm
\title[Generalized KdV equation]{The supercritical generalized KdV equation: Global
well-posedness in the energy space and below }



\author[L. G. Farah]{Luiz G. Farah}
\address{ICEx, Universidade Federal de Minas Gerais, Av. Ant\^onio Carlos, 6627, Caixa Postal 702, 30123-970,
Belo Horizonte-MG, Brazil}
\email{lgfarah@gmail.com}

\author[F. Linares]{Felipe Linares}
\address{IMPA, Estrada Dona Castorina 110, CEP 22460-320, Rio de Janeiro, RJ,
 Brazil.}
\email{linares@impa.br}
\thanks{The second author was partially supported by CNPq/Brazil.}

\author[A. Pastor]{Ademir Pastor}
\address{IMECC-UNICAMP, Rua S\'ergio Buarque de Holanda, 651, 13083-859, Cam\-pi\-nas-SP, Bra\-zil}
\email{apastor@ime.unicamp.br}




 \begin{abstract}
 We consider the generalized Korteweg-de Vries (gKdV) equation
 $\partial_t u+\partial_x^3u+\mu\partial_x(u^{k+1})=0$,
where $k\geq5$ is an integer number and $\mu=\pm1$. In the focusing
case ($\mu=1$), we show that if the initial data $u_0$ belongs to
$H^1(\R)$ and satisfies $E(u_0)^{s_k} M(u_0)^{1-s_k} < E(Q)^{s_k}
M(Q)^{1-s_k}$, $E(u_0)\geq0$, and $\|\partial_x
u_0\|_{L^2}^{s_k}\|u_0\|_{L^2}^{1-s_k} < \|\partial_x
Q\|_{L^2}^{s_k}\|Q\|_{L^2}^{1-s_k}$, where $M(u)$ and $E(u)$ are the
mass and energy, then the corresponding solution is global in
$H^1(\R)$. Here, $s_k=\frac{(k-4)}{2k}$ and $Q$ is the ground state
solution corresponding to the gKdV equation. In the defocusing case
($\mu=-1$), if $k$ is even, we prove that the Cauchy problem is
globally well-posed in the Sobolev spaces $H^s(\mathbb{R})$,
$s>\frac{4(k-1)}{5k}$.
 \end{abstract}

\maketitle

\section{Introduction}\label{introduction}

Consider the Initial Value Problem (IVP) associated with the supercritical  generalized Korteweg-de Vries
(gKdV) equation, i.e.,
\begin{equation}\label{gkdv}
\begin{cases}
\partial_t u+\partial_x^3u+\mu\partial_x(u^{k+1})=0, \;\;x\in\R, \;t>0, \\
u(x,0)=u_0(x),
\end{cases}
\end{equation}
where $\mu=\pm1$.

Local well-posedness of the Cauchy problem \eqref{gkdv} (with $k\geq1$) has been studied by many
authors in recent years. We refer the reader to Kenig, Ponce and Vega \cite{kpv1}, \cite{kpv2} for
a complete set of sharp results.

Our main interest here is on global well-posedness. Let us briefly recall the best
results available in the literature. For $k=1$ and $k=2$, global well-posedness was established by
Colliander, Keel, Staffilani, Takaoka, and Tao \cite{CKSTT3} for data, respectively, in $H^s(\mathbb{R}),
s>-3/4$ and $H^s(\mathbb{R}), s>1/4$, and by Guo \cite{Gu} for data, respectively, in $H^{-3/4}(\mathbb{R})$
and $H^{1/4}(\mathbb{R})$. These results show to be sharp in view of the work of   Kenig, Ponce, and Vega
\cite{KPV5} (see also \cite{BKPSV96}, \cite{CCT},  \cite{Tz}).

The case $k=3$ was dealt with by Gr\"unrock, Panthee, and Silva
\cite{GPS}, where the authors showed global well-posedness in
$H^s(\mathbb{R}), s>-1/42$. It should be pointed out that for $k=3$,
Tao \cite{tao} established a local existence result in
$\dot{H}^{-\frac16}(\R)$, the critical (scale-invariant) space,
therefore for small data the solutions extend globally. For recent
progress in this case we refer Koch and Marzuola \cite{koch-m}.
Under ``sharp smallness condition'', the critical case $k=4$ was
studied by Fonseca, Linares, and Ponce in \cite{FLP1}. There it was
established global well-posedness in $H^s(\mathbb{R}), s>3/4$. Farah
\cite{LG6} used the I-method of \cite{CKSTT3}, to further lower the
regularity of the initial data to $s > 3/5$. Recently, Miao, Shao,
Wu, and Xu \cite{miao}, improved the latter result to initial data
in $H^s(\R)$, $s>6/13$. Their method of proof combines the I-method
with a multilinear correction analysis.  For $k=4$, Kenig, Ponce and
Vega \cite{kpv1} showed local well-posedness for data in $L^2$ the
critical space in this case which for small data yield global
solutions. Finally, we should mention that for $k=4$, Merle
\cite{mer} and Martel and Merle \cite{marmer} proved the existence
of real-valued solutions of \eqref{gkdv} in $H^1(\R)$ corresponding
to data in $u_0\in H^1(\R)$ with $\|u_0\|_{L^2}>\|Q\|_{L^2}$ that
blow-up. For $k>4$ it is an outstanding open problem.

As far as we are concerned, for $k\geq5$, no global results below the energy space are available.
Not even a precise description of the conditions to obtain $H^1$ global solutions. These facts
motivate the present study.

To start with the local results, using a scaling argument let us motivate what should be the Sobolev spaces to
studying \eqref{gkdv}. Note if $u$ is a solution of \eqref{gkdv}, then, for any $\lambda>0$,
$u_\lambda(x,t)=\lambda^{2/k}u(\lambda x,\lambda^3t)$ is also a solution with initial data
$u_\lambda(x,0)=\lambda^{2/k}u_0(\lambda x)$. Moreover,
$$
\|u_\lambda(\cdot,0)\|_{\dot{H}^s}=\lambda^{s+2/k-1/2}\|u_0\|_{\dot{H}^s}.
$$
Thus, for each $k$ fixed, the scale-invariant Sobolev space is $\dot{H}^{s_k}$, $s_k=1/2-2/k$. Therefore, the natural
Sobolev spaces to studying \eqref{gkdv} are  ${H}^{s}$, $s>s_k=1/2-2/k$. Actually, this question has already been
addressed by Kenig, Ponce, and Vega \cite{kpv1}. More precisely, they show the following.

\begin{theorem}\label{local}
Let $k>4$ and $s>s_k=(k-4)/2k$. Then for any $u_0\in H^s(\R)$ there exist $T=T(\|u_0\|_{H^s})>0$
(with $T(\rho;s)\to 0$ as $\rho\to 0$) and a unique strong solution $u(\cdot)$ of the IVP \eqref{gkdv}
satisfying:
\begin{equation}  \label{a1}
u\in C([-T,T]:H^s(\R)),
\end{equation}
\begin{equation}\label{a2}
\|u\|_{L^5_xL^{10}_T}+\|D^{s}_xu\|_{L^5_xL^{10}_T}<\infty
\end{equation}
\begin{equation}\label{a3}
\|u_x\|_{L^\infty_xL^{2}_T}+\|D^{s}_xu_x\|_{L^\infty_xL^{2}_T}<\infty,
\end{equation}
and
\begin{equation}\label{a3b}
\|D^{\gamma_k}_tD^{\alpha_k}_xD^{\beta_k}_tu\|_{L^{p_k}_xL^{q_k}_T}<\infty
\end{equation}
where
\begin{equation} \label{a4}
\alpha_k=\frac{1}{10}-\frac{2}{5k}, \qquad
\beta_k=\frac{3}{10}-\frac{6}{5k}, \qquad
\gamma_k=\gamma_k(s)=\frac{s-s_k}{3}
\end{equation}
\begin{equation} \label{a5}
\frac{1}{p_k}=\frac{2}{5k}+\frac{1}{10}, \qquad \frac{1}{q_k}=\frac{3}{10}-\frac{4}{5k}.
\end{equation}
Furthermore, given $T'\in(0,T)$ there exists a neighborhood $V$ of $u_0$ in $H^s(\R)$ such that the map
$u_0\mapsto \tilde{u}(t)$ from $V$ into the class defined by \eqref{a1}-\eqref{a3} with $T'$ instead of $T$
is smooth.
\end{theorem}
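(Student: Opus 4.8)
The plan is to solve the Duhamel (integral) formulation of \eqref{gkdv},
\begin{equation}\label{duhamel-plan}
\Phi(u)(t) = U(t)u_0 - \mu\int_0^t U(t-t')\,\p_x\big(u^{k+1}\big)(t')\,dt',
\end{equation}
where $U(t)=e^{-t\p_x^3}$ is the unitary Airy group, by a fixed-point argument. I would introduce the Banach space $X^s_T$ carrying the norm
\begin{equation}\label{norm-plan}
\begin{split}
\lambda_T(u) = {}&\|u\|_{L^\infty_TH^s_x} + \|u\|_{L^5_xL^{10}_T} + \|D^s_xu\|_{L^5_xL^{10}_T}\\
&{}+ \|\p_xu\|_{L^\infty_xL^2_T} + \|D^s_x\p_xu\|_{L^\infty_xL^2_T} + \|D^{\g_k}_tD^{\al_k}_xD^{\b_k}_tu\|_{L^{p_k}_xL^{q_k}_T},
\end{split}
\end{equation}
which records exactly the regularity asserted in \eqref{a1}--\eqref{a3b}, and then run the contraction on a closed ball $\{u\in X^s_T:\lambda_T(u)\le R\}$ with $R$ comparable to $\|u_0\|_{H^s}$; a closed ball in a Banach space being complete, this is a legitimate setting for Picard iteration (should the strong norm fail to be Lipschitz one would, as usual, carry out the contraction estimate in a weaker metric while keeping the bound $\lambda_T\le R$).

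The heart of the matter is a family of linear estimates for $U(t)$, all of which I would establish by oscillatory-integral and $TT^*$ arguments applied to the Airy phase $\xi^3$, or quote from \cite{kpv1}, \cite{kpv2}. The homogeneous bounds I need are the sharp Kato smoothing estimate, which gains a full derivative, $\|\p_xU(t)u_0\|_{L^\infty_xL^2_t}\lesssim\|u_0\|_{L^2}$; the Strichartz-type bound $\|U(t)u_0\|_{L^5_xL^{10}_t}\lesssim\|u_0\|_{L^2}$; and the maximal-function and mixed space-time fractional-derivative estimates feeding \eqref{a3b}. Each comes with an inhomogeneous (retarded) counterpart controlling $\int_0^t U(t-t')F(t')\,dt'$ in the same norms. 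The point of combining the smoothing gain with the retarded estimate is that the $\p_x$ appearing in \eqref{duhamel-plan} is absorbed by the derivative gained from the group, so that one effectively estimates $u^{k+1}$ rather than $\p_x(u^{k+1})$.

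With the linear machinery in place, the nonlinear step is to prove $\lambda_T(\Phi(u))\le C\|u_0\|_{H^s}+C\,\lambda_T(u)^{k+1}$ together with the matching difference estimate. Here I would invoke the Kenig-Ponce-Vega fractional Leibniz rule to distribute $D^s_x$ across the product $u^{k+1}$ --- one factor absorbs the derivative while the remaining $k$ factors are measured in the Strichartz and maximal-function norms --- and then close everything by H\"older in both $x$ and $t$. The exponent arithmetic is precisely what forces the threshold $s>s_k$: the indices $\al_k,\b_k,\g_k,p_k,q_k$ in \eqref{a4}--\eqref{a5} are engineered so that the H\"older balance holds, and the strict inequality $s>s_k$, equivalently $\g_k=(s-s_k)/3>0$, leaves room to extract a positive power of $T$ from the time integration in the Duhamel term. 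That gain supplies the smallness that makes $\Phi$ a contraction on a short interval $[-T,T]$, with $T=T(\|u_0\|_{H^s})>0$ for arbitrary (not necessarily small) data; the argument runs symmetrically for both signs of $t$ since $U(t)$ is a group.

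I expect the decisive obstacle to be closing the nonlinear estimate exactly at this scaling threshold. Because the problem is $L^2$-supercritical for $k>4$, energy methods are unavailable and there is no slack: the single derivative lost to $\p_x(u^{k+1})$ must be recovered through the half-derivative smoothing of $U(t)$ spread across the mixed norms, and the fractional Leibniz bookkeeping has to be performed without wasting regularity, which is the reason the auxiliary norm \eqref{a3b} with its carefully chosen exponents is indispensable rather than decorative. Once the estimates close, uniqueness, persistence of the regularity \eqref{a1}--\eqref{a3b}, and the smoothness of the data-to-solution map all follow from the same contraction --- the map $u\mapsto\Phi(u)$ being polynomial in $u$ --- the last via the implicit function theorem on $[-T',T']$.
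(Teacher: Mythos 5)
Your proposal follows essentially the same route as the paper: a contraction argument for the Duhamel operator $\Phi$ on a ball of the space defined by exactly the norm \eqref{norms}, using the Kenig--Ponce--Vega smoothing, Strichartz, and mixed fractional-derivative estimates (Lemmas \ref{lemma1}--\ref{lemma5}) together with the fractional Leibniz rule (Lemma \ref{lemma6}), and extracting the factor $T^{k\gamma_k}$ from the strict inequality $s>s_k$ via Sobolev in time. The proposal is correct and matches the paper's proof in all essential respects.
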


The method to prove Theorem \ref{local} combines smoothing effects and Strichartz-type estimates together with
the Banach contraction principle. As a matter of fact, the original theorem stated in \cite{kpv1} differs slightly
in the functional spaces. Here, we will give a skech of the proof of
Theorem \ref{local} in this functional spaces setting.

\begin{remark} It should be observed that in \cite{kpv1} the authors also showed a local result for initial data in
$\dot{H}^{s_k}(\R)$, $s_k$ as above, but $T=T(u_0)$, that is, the existence time $T$ depends on $u_0$ itself and not
on $\|u_0\|_{\dot{H}^{s_k}}$ (see also \cite{BKPSV96}) and that this is global if $\|u_0\|_{\dot{H}^{s_k}}\le c_k$
for real or complex-valued data.
\end{remark}

Once Theorem \ref{local} is established, a natural question presents itself: can the real solutions be extended
globally-in-time? Such a question has mathematical and physical interest and it has  been widely studied in the
past few years.

By observing that the flow of the gKdV equation is conserved by the quantities:
\begin{equation}\label{MC}
Mass\equiv M(u(t))=\int u^2(t)\, dx
\end{equation}
and
\begin{equation}\label{EC}
Energy\equiv E(u(t))=\frac{1}{2}\int(\partial_xu)^2(t)\;dx-\frac{\mu}{k+2}\int u^{k+2}(t)\;dx,\\
\end{equation}
one can partially answer this question for solutions in $H^1(\R)$ if the initial data is small. Indeed, the
quantities $M$ and $E$ allows us to obtain a priori estimates as follow: Using $M$ we can control $\|u(t)\|_{L^2}$.
In order to control $\|\partial_xu(t)\|_{L^2}$, we
use $E$ by writing
\begin{equation}\label{ap1}
\|\partial_x u(t)\|_{L^2}^2=2E(u_0)+\frac{2\mu}{k+2}\int u^{k+2}(t)\;dx.
\end{equation}
The Gagliardo-Nirenberg inequality yields
\begin{equation}\label{ap2}
\begin{split}
\int u^{k+2}(t)\;dx&\le c\,\|\partial_x u(t)\|_{L^2}^{\theta(k+2)}\|u(t)\|_{L^2}^{(1-\theta)(k+2)},
\quad\theta=\frac{k}{2(k+2)},\\
&=c\,\|u_0\|_{L^2}^{(k+4)/2}\|\partial_x u(t)\|_{L^2}^{k/2}.
\end{split}
\end{equation}
From \eqref{ap1} and \eqref{ap2} it follows that
\begin{equation}\label{ap3}
\|\partial_x u(t)\|_{L^2}^2\le 2E(u_0)+\frac{2c}{k+2}\,\|u_0\|_{L^2}^{(k+4)/2}\|\partial_x u(t)\|_{L^2}^{k/2}.
\end{equation}
Now let $X(t)=\|\partial_x u(t)\|_{L^2}^2$, for $t\in (0,T)$ ($T$ given by Theorem \ref{local}). Since $k>4$ the
inequality \eqref{ap3} can be written as
\begin{equation}\label{ap4}
X(t)\le 2E(u_0)+\frac{2c}{k+2}\,\|u_0\|_{L^2}^{(k+4)/2}X(t)^{1+\epsilon},\quad \epsilon>0,
\end{equation}
or
\begin{equation}\label{ap5}
X(t)-c(k,\|u_0\|_{L^2})\,X(t)^{1+\epsilon}\le 2E(u_0).
\end{equation}
Thus if $0\le 2E(u_0)$ is not too large one can guarantee the existence of $0<\beta_1<\beta_2$ where
the inequality \eqref{ap5} holds in the intervals $[0,\beta_1]$ and $[\beta_2,\infty)$. By continuity if
we have $X(0)\in [0,\beta_1]$, $X(t)$ will remain there for $t\in (0,T)$. Hence $\|\partial_x u(t)\|_{L^2}$
will be bound for $t\in (0,T)$ and we can apply the local result to extend the solution. The argument
works if the initial data is small enough, i.e., $\|u_0\|_{H^1}$ is sufficiently small to satisfy the conditions
along the previous argument.

Note that the case where $\mu=-1$ and $k$ even is, in some sense, special. Indeed, since $k$ is even we have
$\int u^{k+2}(x,t)dx>0$, for all $t>0$, which implies
\begin{equation*}
\|\partial_xu(t)\|^2_{L^2}\lesssim E(u)(t).
\end{equation*}
Therefore, we have an \emph{a priori} bound for $\|\partial_x u(t)\|_{L^2}$ which, together with mass conservation
\eqref{MC} and local theory, implies global well-posedness without any smallness condition.

The above discussion can be summarized in the following theorem (see also \cite[Theorem 2.15]{kpv1}).

\begin{theorem}\label{global0}
Let $k>4$ and $s>s_k=(k-4)/2k$. Then
\begin{itemize}
\item [(a)] if $\mu=\pm 1$, there exists $\delta_k>0$ such that for any $u_0 \in H^1$ with
$$\|u_0\|_{H^1}<\delta_k$$
there exists a unique strong solution $u(\cdot)$ of the IVP \eqref{gkdv} satisfying
\begin{equation*}
u\in C(\R:H^1(\R)) \cap L^{\infty}(\R:H^1(\R)).
\end{equation*}
\item [(b)] if $\mu=-1$ and $k$ is even then the same statement is true without any smallness assumption on the
initial data.
\end{itemize}
\end{theorem}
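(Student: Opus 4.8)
The plan is to combine the local existence result of Theorem \ref{local}, applied with $s=1$ (which is admissible since $s_k=(k-4)/2k<1$ for $k>4$), with the conserved quantities \eqref{MC}--\eqref{EC} to produce an a priori bound for $\|u(t)\|_{H^1}$ that is uniform in $t$. Because the local existence time furnished by Theorem \ref{local} depends only on $\|u_0\|_{H^1}$, a uniform-in-time bound on $\|u(t)\|_{H^1}$ lets one reconstruct the solution on successive time intervals of a fixed length, and hence continue it to all of $\R$. The continued solution lies in $C(\R:H^1(\R))$ by the persistence of the local theory, the uniform bound places it in $L^\infty(\R:H^1(\R))$, and uniqueness is inherited from the local statement. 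So for both items everything reduces to an a priori control of $\|\partial_x u(t)\|_{L^2}$, since $\|u(t)\|_{L^2}=\|u_0\|_{L^2}$ is already fixed by \eqref{MC}.

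For part (a) I would follow the computation recorded in \eqref{ap1}--\eqref{ap5}. Writing $X(t)=\|\partial_x u(t)\|_{L^2}^2$, the energy identity \eqref{ap1} together with the Gagliardo--Nirenberg bound \eqref{ap2} gives the scalar inequality \eqref{ap5}, namely
\begin{equation*}
X(t)-c(k,\|u_0\|_{L^2})\,X(t)^{1+\epsilon}\le 2E(u_0),\qquad \epsilon=\frac{k-4}{4}>0,
\end{equation*}
valid for every $t$ in the local existence interval and for either sign $\mu=\pm1$, since \eqref{ap2} bounds $|\int u^{k+2}|$ irrespective of the sign of $\int u^{k+2}$. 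Setting $f(X)=X-c\,X^{1+\epsilon}$, this function vanishes at $X=0$, rises to a single positive maximum at some $X_*>0$, and then decreases; I would choose $\delta_k$ small enough that smallness of $\|u_0\|_{H^1}$ forces simultaneously $2E(u_0)<f(X_*)$ and $X(0)=\|\partial_x u_0\|_{L^2}^2$ to lie below $X_*$. Under this choice the sublevel set $\{X\ge0 : f(X)\le 2E(u_0)\}$ splits into two disjoint pieces $[0,\beta_1]$ and $[\beta_2,\infty)$ with $0<\beta_1<X_*<\beta_2$, and $X(0)\in[0,\beta_1]$.

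For the defocusing even case (b) no smallness is needed and the bound is immediate: when $\mu=-1$ and $k$ is even one has $u^{k+2}\ge0$, so \eqref{EC} reads $E(u)=\tfrac12\|\partial_x u\|_{L^2}^2+\tfrac1{k+2}\int u^{k+2}$, whence $\|\partial_x u(t)\|_{L^2}^2\le 2E(u(t))=2E(u_0)$ for all $t$. Together with mass conservation this is already a uniform $H^1$ bound, and the continuation argument of the first paragraph applies verbatim, with no restriction on the size of $u_0$.

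The main obstacle---and really the only delicate point---is the barrier/continuity step in part (a): one must verify that $t\mapsto X(t)$ is continuous (which follows from $u\in C([-T,T]:H^1)$ granted by Theorem \ref{local}) and that, starting from $X(0)\in[0,\beta_1]$, it cannot jump across the gap $(\beta_1,\beta_2)$ into the far component, so that it stays trapped in $[0,\beta_1]$ on the whole interval. This yields $\|\partial_x u(t)\|_{L^2}^2\le\beta_1$, and the genuine separation $\beta_1<\beta_2$ is precisely what the smallness of $2E(u_0)$ buys. Quantifying $\delta_k$ so that $E(u_0)$ is small enough to open the gap while $X(0)$ sits in the lower component is the bookkeeping that makes the scheme run; everything else is the standard conservation-law continuation.
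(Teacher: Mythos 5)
Your proposal is correct and follows essentially the same route as the paper: the a priori bound via mass conservation, the energy identity \eqref{ap1}, the Gagliardo--Nirenberg estimate \eqref{ap2} leading to the scalar inequality \eqref{ap5}, the continuity/trapping argument in the lower component $[0,\beta_1]$ for small data, the sign observation $\int u^{k+2}\ge 0$ giving the unconditional bound when $\mu=-1$ and $k$ is even, and iteration of the local theory whose existence time depends only on $\|u_0\|_{H^1}$. No substantive differences from the paper's argument.
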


We have two main goals in this paper. The first one is to make precise the $H^1$-size of the initial data
(in the preceding argument) to construct global $H^1$ solutions when $\mu=1$ or $\mu=-1$ and $k$ odd. The second one
is to loosen the regularity requirements on the initial data which ensure global-in-time solutions for the
IVP (\ref{gkdv}) when $\mu=-1$ and $k$ even. Below we also explain why we cannot apply the same method when
$\mu=1$ or $\mu=-1$ and $k$ odd (see Remark \ref{KODD}).

We consider first the focusing case $\mu=1$ or the defocusing case $\mu=-1$ with $k$ odd.
As explained above, it is not clear how large is the size of the initial data in $H^1$ to obtain global solutions.
The next theorem shows us how small the initial data should be.

\begin{theorem}\label{global3}
 Let $u_0\in H^1(\R)$. Let $k>4$ and $s_k=(k-4)/2k$. Suppose that
\begin{equation}\label{GR1}
E(u_0)^{s_k} M(u_0)^{1-s_k} < E(Q)^{s_k} M(Q)^{1-s_k} , \,\,\, E(u_0) \geq 0.
\end{equation}
If
\begin{equation}\label{GR2}
\|\partial_x u_0\|_{L^2}^{s_k}\|u_0\|_{L^2}^{1-s_k} < \|\partial_x Q\|_{L^2}^{s_k}\|Q\|_{L^2}^{1-s_k},
\end{equation}
then for any $t$ as long as the solution exists,
\begin{equation}\label{GR3}
\|\partial_x u(t)\|_{L^2}^{s_k}\|u_0\|_{L^2}^{1-s_k}=\|\partial_x u(t)\|_{L^2}^{s_k}\|u(t)\|_{L^2}^{1-s_k}
< \|\partial_x Q\|_{L^2}^{s_k}\|Q\|_{L^2}^{1-s_k},
\end{equation}
where $Q$ is unique positive radial solution of the elliptic equation
\begin{equation*}
\Delta Q-Q+Q^{k+1}=0.
\end{equation*}
This in turn implies that $H^1$ solutions exist globally in time.
\end{theorem}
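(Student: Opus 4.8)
The plan is to run the now-standard variational argument of Holmer--Roudenko type (adapted from the nonlinear Schr\"odinger setting to gKdV), reducing the whole question to a one-variable analysis of the energy functional followed by a continuity argument; the only inputs are the conserved quantities \eqref{MC}--\eqref{EC}, the sharp Gagliardo--Nirenberg inequality, and the local theory of Theorem \ref{local}. First I would record the sharp constant in \eqref{ap2}: the extremizer of
$$\int u^{k+2}\,dx \le C_{GN}\,\|\partial_x u\|_{L^2}^{k/2}\,\|u\|_{L^2}^{(k+4)/2}$$
is the ground state $Q$, so $C_{GN}=\big(\int Q^{k+2}\big)\big/\big(\|\partial_x Q\|_{L^2}^{k/2}\|Q\|_{L^2}^{(k+4)/2}\big)$. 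From $Q''-Q+Q^{k+1}=0$ the Nehari and Pohozaev identities (multiply the equation by $Q$ and by $xQ'$ and integrate) yield
$$\|\partial_x Q\|_{L^2}^2=\tfrac{k}{2(k+2)}\int Q^{k+2},\qquad \|Q\|_{L^2}^2=\tfrac{k+4}{2(k+2)}\int Q^{k+2},$$
and hence $E(Q)=\tfrac{k-4}{4(k+2)}\int Q^{k+2}>0$. These are exactly the identities that make the threshold in \eqref{GR1} natural, since both $E^{s_k}M^{1-s_k}$ and $\|\partial_x u\|_{L^2}^{s_k}\|u\|_{L^2}^{1-s_k}$ are invariant under the scaling $u\mapsto \lambda^{2/k}u(\lambda\,\cdot)$, as one checks from the exponents computed in the Introduction.

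Next, writing $w(t)=\|\partial_x u(t)\|_{L^2}^2$ and $m=\|u_0\|_{L^2}^2$ (conserved by \eqref{MC}), I would bound the nonlinear term crudely by $\big|\int u^{k+2}\big|\le \int |u|^{k+2}$ and apply Gagliardo--Nirenberg to obtain, for either sign $\mu=\pm1$,
$$E(u_0)=E(u(t))\ge \tfrac12 w(t)-\tfrac{C_{GN}}{k+2}\,m^{(k+4)/4}\,w(t)^{k/4}=:h(w(t)).$$
Since $k>4$, the function $h$ vanishes at $0$, is strictly increasing up to a unique maximum at a point $w_*$, and strictly decreasing afterwards. A direct computation using the identities above shows that $w_*$ is precisely the value making the scale-invariant gradient quantity equal to that of $Q$, i.e. $w_*^{\,s_k/2}m^{(1-s_k)/2}=\|\partial_x Q\|_{L^2}^{s_k}\|Q\|_{L^2}^{1-s_k}$, and that the barrier height satisfies $h(w_*)^{s_k}m^{1-s_k}=E(Q)^{s_k}M(Q)^{1-s_k}$, where $M(Q)=\|Q\|_{L^2}^2$.

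I would then close the argument by continuity. Hypothesis \eqref{GR1} together with $E(u_0)\ge0$ gives $E(u_0)<h(w_*)$, so the sublevel set $\{w\ge0:h(w)\le E(u_0)\}$ splits into two disjoint pieces $[0,w_-]\cup[w_+,\infty)$ with $w_-<w_*<w_+$; hypothesis \eqref{GR2}, via the identity for $w_*$, says precisely that $w(0)<w_*$, whence $w(0)\le w_-$. Since $t\mapsto w(t)$ is continuous (as $u\in C([-T,T]:H^1)$) and $E(u(t))\equiv E(u_0)$ forces $w(t)\in[0,w_-]\cup[w_+,\infty)$ throughout, connectedness prevents $w(t)$ from ever reaching the far component, so $w(t)\le w_-<w_*$ on the entire interval of existence; this is exactly \eqref{GR3}. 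Finally, $w(t)\le w_-$ furnishes a uniform bound $\sup_t\|u(t)\|_{H^1}\le R<\infty$ (mass conservation controlling the $L^2$ part), and since the existence time in Theorem \ref{local} depends only on the $H^1$-norm, reapplying the local theory on successive intervals of a fixed length extends the solution to all of $\R$.

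I expect the main obstacle to be the bookkeeping of the scale-invariant exponents: matching the Gagliardo--Nirenberg power $k/4$ against the exponents $s_k,\,1-s_k$ in \eqref{GR1}--\eqref{GR2}, and verifying the two identities $w_*^{\,s_k/2}m^{(1-s_k)/2}=\|\partial_x Q\|_{L^2}^{s_k}\|Q\|_{L^2}^{1-s_k}$ and $h(w_*)^{s_k}m^{1-s_k}=E(Q)^{s_k}M(Q)^{1-s_k}$ exactly, since this is where the analytic hypotheses get converted into the geometric statement that $w(0)$ sits in the left well below the barrier $h(w_*)$. Everything else---the Pohozaev computation, the shape of $h$, and the continuity/bootstrap step---is routine once the normalization is pinned down, and the same lower bound $E\ge h(w)$ covers the focusing case $\mu=1$ and the defocusing case $\mu=-1$ with $k$ odd simultaneously.
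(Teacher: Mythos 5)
Your proposal is correct and follows essentially the same route as the paper: the sharp Gagliardo--Nirenberg constant expressed through $Q$ via the Nehari/Pohozaev identities, the conserved energy and mass reducing matters to the one-variable inequality $X(t)-BX(t)^{k/4}\le 2E(u_0)$, and a continuity-in-time trapping argument below the local maximum of $x\mapsto x-Bx^{k/4}$, with the scale-invariant exponent bookkeeping converting the barrier conditions into \eqref{GR1}--\eqref{GR2}. The paper's proof is exactly this (with $f(x)=x-Bx^{k/4}$, $x_0=(4/(kB))^{4/(k-4)}$ playing the role of your $h$ and $w_*$), so no further comparison is needed.
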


To prove Theorem \ref{global3}, we follow closely the arguments in  Holmer and Roudenko \cite{HR08} which were
inspired by those introduced by Kenig and Merle \cite{kenig-merle}.

Next we consider the defocusing case $\mu=-1$ with $k$ even. Our main result is the following.

\begin{theorem}\label{T1}
Let $\mu=-1$ and assume that $k$ is even. Let $u_0\in H^s(\mathbb{R}), s>\frac{4(k-1)}{5k}$. Then, the local
solution in Theorem \ref{local} can be extended to any time interval. Moreover, for all $T>0$, the solution satisfies
\begin{equation}\label{pb}
\sup_{t\in[0,T]}\left\{\|u(t)\|^2_{H^{s}}\right\}\leq C(1+T)^{\frac{(1+4/k)(1-s)}{5s-4(k-1)/k}+},
\end{equation}
where the constant $C$ depends only on $s$ and $\|u_0\|_{H^{s}}$.
\end{theorem}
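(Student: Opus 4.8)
The plan is to prove Theorem~\ref{T1} by the I-method of Colliander, Keel, Staffilani, Takaoka and Tao, implemented in the Kenig--Ponce--Vega function spaces appearing in Theorem~\ref{local}. Fix $s$ in the stated range and let $N\gg1$ be a large parameter to be chosen. I introduce the smoothing multiplier $I=I_N$ defined by $\widehat{Iu}(\xi)=m(\xi)\widehat u(\xi)$, where $m$ is smooth, radial, nonincreasing in $|\xi|$, with $m(\xi)=1$ for $|\xi|\le N$ and $m(\xi)=(N/|\xi|)^{1-s}$ for $|\xi|\ge 2N$. Then $I:H^s\to H^1$ is bounded with $\|Iu\|_{H^1}\lesssim N^{1-s}\|u\|_{H^s}$, and $I\to\mathrm{Id}$ as $N\to\infty$. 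The reason for working with $Iu$ is that, since $\mu=-1$ and $k$ is even, the modified energy
\[
E(Iu)=\frac12\int(\partial_x Iu)^2\,dx+\frac1{k+2}\int (Iu)^{k+2}\,dx
\]
is a sum of two nonnegative quantities; hence it controls $\|\partial_x Iu\|_{L^2}^2$, and together with the conserved mass (controlling $\|Iu\|_{L^2}$) it bounds $\|Iu(t)\|_{H^1}$, and therefore $\|u(t)\|_{H^s}$, in terms of $E(Iu(t))$.

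The heart of the matter is an almost conservation law for $E(Iu)$. First I would establish a variant of the local theory: because $I$ commutes with the linear group and is bounded on all the mixed space-time norms in \eqref{a2}--\eqref{a3b}, the contraction argument underlying Theorem~\ref{local} yields, on a time interval $[0,\delta]$ with $\delta\sim\|Iu_0\|_{H^1}^{-\theta}$ for a suitable $\theta>0$, control of the relevant norms of $Iu$ by $\|Iu_0\|_{H^1}$. Next, since $Iu$ solves $\partial_t Iu+\partial_x^3 Iu=\partial_x I(u^{k+1})$, a direct computation using mass conservation and the fact that $E$ is exactly conserved by the unmodified flow gives
\[
\frac{d}{dt}E(Iu)=\Big\langle -\partial_x^2 Iu+(Iu)^{k+1},\ \partial_x\big(I(u^{k+1})-(Iu)^{k+1}\big)\Big\rangle,
\]
where the commutator $I(u^{k+1})-(Iu)^{k+1}$ vanishes identically when $N=\infty$. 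The goal is the increment estimate
\[
\sup_{t\in[0,\delta]}\big|E(Iu(t))-E(Iu_0)\big|\lesssim N^{-\beta}\,\|Iu_0\|_{H^1}^{\,\gamma}
\]
for some $\beta>0$. This is obtained by expanding the commutator on the Fourier side: a gain occurs precisely because at least one frequency must exceed $N$ for $m$ to differ from $1$, and the multiplier difference is then comparable to a negative power of $N$. The resulting $(k+2)$-linear integral is estimated by distributing the derivative and the frequency weights and applying the Strichartz estimate \eqref{a2}, the smoothing estimate \eqref{a3}, and the maximal-function estimate, exactly the ingredients already used to prove Theorem~\ref{local}.

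With the almost conservation law in hand, I would rescale and iterate. Using the scaling $u_\lambda(x,t)=\lambda^{2/k}u(\lambda x,\lambda^3t)$ I choose $\lambda=\lambda(N)$ so that $E(Iu_{0,\lambda})\lesssim1$, equivalently $\|Iu_{0,\lambda}\|_{H^1}\sim1$; this fixes $\lambda$ as a negative power of $N$ depending on $s$. On the rescaled solution the local lifespan is $\delta\sim1$ and each step increases $E(Iu_\lambda)$ by at most $CN^{-\beta}$, so $E(Iu_\lambda(t))$ stays $\lesssim1$ for a number of steps $\sim N^{\beta}$, i.e.\ up to rescaled time $\sim N^{\beta}$. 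Undoing the scaling converts this into control of $\|u(t)\|_{H^s}$ on $[0,T]$ provided $N$ can be taken large enough as $T\to\infty$; balancing the three exponents (the $N^{1-s}$ loss in passing from $H^s$ to $H^1$, the scaling power of $\lambda$, and the gain $N^{-\beta}$) produces precisely the constraint $5s-4(k-1)/k>0$, that is $s>\tfrac{4(k-1)}{5k}$, and yields the polynomial-in-$T$ bound \eqref{pb}.

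The main obstacle is the multilinear increment estimate, i.e.\ extracting the negative power $N^{-\beta}$ from the commutator term while paying only the norms controlled by the local theory. Because $k\ge5$ the nonlinearity is of high order, so one must carefully perform the frequency case analysis (configurations with all frequencies $\lesssim N$ giving no contribution, and any interaction with a frequency $\gtrsim N$ supplying the decay), track how derivatives are shared among the $k+2$ factors, and verify that the exponent $\beta$ so obtained is large enough for the iteration to reach arbitrarily large $T$ down to the claimed threshold on $s$. Pinning down the sharp $\beta$---and hence the sharp range of $s$ and the explicit growth rate in \eqref{pb}---is where the delicate harmonic analysis lies.
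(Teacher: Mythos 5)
Your overall architecture is exactly the paper's: the multiplier $I_N$, the modified energy $E(Iu)$ (which for $\mu=-1$, $k$ even controls $\|\partial_x Iu\|_{L^2}^2$ since both terms are nonnegative), a rescaling $\lambda=\lambda(N)$ normalizing $E(Iu_{0,\lambda})\lesssim 1$, an iteration of a modified local result over $\sim N^{\beta}$ unit time steps, and an undoing of the scaling that converts the constraint into $s>\tfrac{4(k-1)}{5k}$ and the bound \eqref{pb}. Your commutator identity for $\tfrac{d}{dt}E(Iu)$ is also correct and is equivalent to the paper's decomposition into a $(k+2)$-linear and a $(2k+2)$-linear functional (note that your inner product contains \emph{both}; you only speak of ``the resulting $(k+2)$-linear integral'', so do not forget to estimate the $(2k+2)$-linear piece separately, as the paper must).

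The genuine gap is in the central increment estimate. You propose to extract the decay $N^{-\beta}$ using only the linear tools of Theorem \ref{local} (the $L^5_xL^{10}_T$ Strichartz bound, the $L^\infty_xL^2_T$ smoothing estimate, and the maximal function estimate). That toolbox does not yield $\beta=2$, and $\beta=2$ is precisely what is needed: the paper's remark after Proposition \ref{p4.1} shows the threshold produced by an increment $N^{-\alpha+}$ is $s>\tfrac{3+\alpha s_k}{3+\alpha}$, which equals $\tfrac{4(k-1)}{5k}$ only for $\alpha=2$; a smaller $\beta$ gives a strictly worse range of $s$ and a worse exponent in \eqref{pb}. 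To reach $N^{-2+}$ the paper works in the Bourgain spaces $X_{1,\frac12+}$ and uses two additional ingredients you do not invoke: (i) a pointwise cancellation in the $(k+2)$-linear symbol, namely the mean value theorem bound $\bigl|\tfrac{m(\xi_2+\cdots+\xi_{k+2})}{m(\xi_2)\cdots m(\xi_{k+2})}-1\bigr|\lesssim N_3/N_2$ in the dominant frequency configuration (your heuristic that ``the multiplier difference is comparable to a negative power of $N$'' is not what actually happens --- the gain is a ratio of frequencies, not a power of $N$); and (ii) the bilinear refined Strichartz (Airy) estimate of Lemma \ref{L3}, $\|\psi_1 D_x\psi_2\|_{L^2_{x,t}}\lesssim\|\psi_1\|_{X_{0,\frac12+}}\|\psi_2\|_{X_{0,\frac12+}}$ for separated frequency supports, applied to two pairs of factors, which supplies the two extra derivative gains $\tfrac{1}{N_1N_2}$ that turn $\tfrac{N_1^3N_3}{N_2}$ into $N^{-2+}N_{\max}^{0-}$. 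Moreover, the paper shows (Proposition \ref{P1}) that the second-generation modified energy is unavailable here because of resonances, so there is no alternative route to $\beta=2$ via a correction term; the bilinear estimate inside the first-generation increment is essential. Without naming this ingredient, your argument cannot close at the claimed regularity, nor produce the stated growth exponent.
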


\begin{remark}
Note that when $k=4$ we recover the result proved in Farah \cite{LG6}.
\end{remark}

Here we use the approach introduced by Colliander, Keel, Staffilani, Takaoka and Tao in \cite{CKSTT4}, the so-called
$I$-method. We also explain why the refined approach introduced by the same authors in \cite{CKSTT3},
cannot be used to improve our global result stated in Theorem \ref{T1} (see Proposition \ref{P1}).

Note that when $u_0 \in H^{s}(\R)$ with
$s<1$ in (\ref{gkdv}), the energy \eqref{EC} could be infinite, and so the
conservation law (\ref{EC}) is meaningless. To overcome this
difficulty, by following the $I$-method scheme, we introduce a modified
energy functional which is also defined for less regular functions.
Unfortunately, this new functional is not strictly conserved, but we
can show that it is \textit{almost} conserved in time. When one is
able to control its growth in time explicitly this allows to iterate
a modified local existence theorem to continue the solution to any
time $T$.

The plan of this paper is as follows. In the next section we introduce some notation and preliminaries.
In Section \ref{localwp} we prove Theorem \ref{local}. Next, in Section \ref{mu=1}, we show Theorem \ref{global3}.
The result of global well-posedness in Theorem \ref{T1} is proved in Section \ref{impglobal}.


\section{Notation and Preliminaries}\label{notation}

Let us start this section by introducing the notation used throughout the paper. We use $c$ to denote various
constants that may vary line by line. Given any positive numbers $a$ and $b$, the notation $a \lesssim b$ means
that there exists a positive constant $c$ such that $a \leq cb$. Also, we denote $a \sim b$ when, $a \lesssim b$
and $b \lesssim a$. We use $a+$ and $a-$ to denote $a+\varepsilon$ and $a-\varepsilon$, respectively, for
arbitrarily small $\varepsilon>0$.

We use $\|\cdot\|_{L^p}$ to denote the $L^p(\R)$ norm. If necessary, we use subscript to inform which variable
we are concerned with. The mixed norm $L^q_tL^r_x$ of $f=f(x,t)$ is defined as
\begin{equation*}
\|f\|_{L^q_tL^r_x}= \left(\int \|f(\cdot,t)\|_{L^r_x}^q dt \right)^{1/q},
\end{equation*}
with the usual modifications when $q =\infty$ or $r=\infty$. The  $L^r_xL^q_t$ norm is similarly defined.

We define the spatial Fourier transform of $f(x)$ by
\begin{equation*}
\hat{f}(\xi)=\int_{\R}e^{-ix\xi}f(x)dx
\end{equation*}
and the space-time Fourier transform of $u(x,t)$ by
\begin{equation*}
\widetilde{u}(\xi, \tau)=\int_{\R}\int_{\R}e^{-i(x\xi+t\tau)}u(x,t)dtdx.
\end{equation*}
Note that the derivative $\partial_x$ is conjugated to multiplication by $i\xi$ by the Fourier transform.

The set of Schwartz functions is represented by $\mathcal{S}(\R)$. We shall also define $D^s$ and $J^s$ to be,
respectively, the Fourier multipliers with symbols $|\xi|^s$ and $\langle \xi \rangle^s = (1+|\xi|)^s$. Thus, the
norm in the Sobolev space $H^s(\R)$ is given by
\begin{equation*}
\|f\|_{H^s}\equiv \|J^sf\|_{L^2_x}=\|\langle \xi \rangle^s\hat{f}\|_{L^2_{\xi}}.
\end{equation*}

We also define the spaces $X_{s,b}(\R\times\R)$  on $\R\times\R$ through the norm
\begin{equation*}
\|F\|_{X_{s,b}(\R\times\R)}=\|\langle\tau-\xi^3\rangle^b\langle\xi\rangle^s \widetilde{F}\|_{L^{2}_{\xi,\tau}}.
\end{equation*}
These spaces were introduced in the study of nonlinear dispersive wave problems
by Bourgain \cite{B}.

For any interval $I$, we define the localized $X_{s,b}(I \times\R)$ spaces by
\begin{equation*}
\|u\|_{X_{s,b}(I \times\R)}=\inf\left\{\|w\|_{X_{s,b}(\R \times\R)}:w(t)=u(t) \textrm{ on }  I\right\}.
\end{equation*}
We often abbreviate $\|u\|_{X_{s\!,b}}$ and
$\|u\|_{X^{I}_{s\!,b}}$, respectively, for
$\|u\|_{X_{s\!,b}(\R\times\R)}$ and
$\|u\|_{X_{s\!,b}(I\times\R)}$.

Let us introduce now some useful  lemmas and inequalities. In what follows, $U(t)$ denotes the group associated with
the linear KdV equation, that is, for any $u_0$,  $U(t)u_0$ is the solution of the linear problem
\begin{equation}\label{lineargkdv}
\begin{cases}
\partial_t u+\partial_x^3u=0, \;\;x\in\R, \;t\in\R, \\
u(x,0)=u_0(x).
\end{cases}
\end{equation}

We begin by recalling the results necessary to prove Theorem \ref{local}.

\begin{lemma}\label{lemma1}
The following inequalities hold.
\begin{itemize}
  \item[(i)] If $u_0\in L^2_x$, then
  $$
   \|\partial_xU(t)u_0\|_{L^\infty_xL^2_T}\lesssim\|u_0\|_{L^2_x}.
   $$
  \item[(ii)] If $g\in L^1_xL^2_T$, then for any $T>0$,
  $$
  \left\|\partial_x\int_0^tU(t-t')g(\cdot,t')dt'\right\|_{L^\infty_TL^2_x}\lesssim\|g\|_{ L^1_xL^2_T}.
  $$
  \item[(iii)] If $g\in L^1_xL^2_T$, then for any $T>0$,
  $$
  \left\|\partial_x^2\int_0^tU(t-t')g(\cdot,t')dt'\right\|_{L^\infty_xL^2_T}\lesssim\|g\|_{ L^1_xL^2_T}.
  $$
\end{itemize}
\end{lemma}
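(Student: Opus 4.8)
The plan is to treat (i) as the fundamental local smoothing estimate and to obtain (ii) and (iii) from it by duality together with the group structure of $U(t)$; all three ultimately rest on computing $U(t)$ on the Fourier side.

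For (i), I would write $U(t)u_0(x)=c\int_{\R}e^{i(x\xi+t\xi^3)}\widehat{u_0}(\xi)\,d\xi$, so that $\partial_xU(t)u_0(x)=c\int_{\R}i\xi\,e^{ix\xi}e^{it\xi^3}\widehat{u_0}(\xi)\,d\xi$. Fixing $x$ and regarding this as a function of $t$, I would perform the change of variables $\tau=\xi^3$, which is a monotone bijection of $\R$ with $d\tau=3\xi^2\,d\xi$, and then apply Plancherel in $t$. The key point, and the crux of the whole lemma, is that the Jacobian factor $3\xi^2$ exactly cancels the factor $|\xi|^2$ produced by the derivative, so that
\[
\|\partial_xU(t)u_0(x)\|_{L^2_t(\R)}^2=c\int_{\R}\frac{|\widehat{u_0}(\xi)|^2}{\xi^2}\cdot 3\xi^2\,d\xi=c'\|u_0\|_{L^2_x}^2,
\]
uniformly in $x$. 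Taking the supremum over $x$ and restricting the time integral to $[0,T]$ (which only decreases the norm) yields (i). It is precisely the cubic dispersion $\xi^3$, whose derivative vanishes only at a single point, that makes this cancellation clean; for a non-monotone phase one would have to localize in frequency first.

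For (ii), I would argue by duality. The operator $A:u_0\mapsto\partial_xU(\cdot)u_0$ is bounded from $L^2_x$ into $L^\infty_xL^2_T$ by (i), and since $U(t)^*=U(-t)$ a direct computation of the adjoint (integrating by parts in $x$) shows that $A^*g=-\partial_x\int_0^TU(-t')g(\cdot,t')\,dt'$ is bounded from $L^1_xL^2_T$ into $L^2_x$. Composing with the $L^2_x$-unitary, $\partial_x$-commuting operator $U(t)$ gives the non-retarded bound $\|\partial_xU(t)\int_0^TU(-t')g\,dt'\|_{L^\infty_TL^2_x}\lesssim\|g\|_{L^1_xL^2_T}$. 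Finally I would pass from $\int_0^T$ to the retarded integral $\int_0^t$ by the Christ--Kiselev lemma, which applies here because the relevant time exponents are $2$ on the input and $\infty$ on the output, and $2<\infty$.

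For (iii) I would compose the two gains of one derivative. Setting $w=\int_0^TU(-t')g(\cdot,t')\,dt'$, the dual estimate above gives $\|\partial_xw\|_{L^2_x}\lesssim\|g\|_{L^1_xL^2_T}$, and then $\partial_x^2\int_0^TU(t-t')g\,dt'=\partial_xU(t)(\partial_xw)$, so (i) yields the non-retarded version of (iii) at once. I expect the main obstacle to lie exactly here, in restoring the retarded cutoff $\mathbf{1}_{\{t'<t\}}$: unlike in (ii), both the input and the output now carry the exponent $L^2_T$, so Christ--Kiselev is at its non-admissible endpoint $p=q=2$ and cannot be quoted directly. To close this gap I would argue directly on the time variable, noting that for each fixed $x$ the kernel of $\partial_x^2U(t-t')$ acts as a convolution singular integral in $t$ whose symbol is bounded uniformly in $x$ (this is essentially the content of the identity used in (i)); the retarded truncation then preserves $L^2_T$-boundedness by Calder\'on--Zygmund theory, equivalently by controlling the anti-retarded correction $\int_t^TU(t-t')g\,dt'$ through the same local-smoothing identity. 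This borderline retarded estimate is the delicate step, whereas (i) is where the actual smoothing is created and (ii)--(iii) are essentially bookkeeping around it.
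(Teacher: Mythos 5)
The paper offers no proof of this lemma at all---it simply cites Kenig--Ponce--Vega \cite{kpv1}---so your attempt has to be measured against the standard argument there. Your part (i) \emph{is} that argument: the change of variables $\tau=\xi^3$ followed by Plancherel in $t$, with the Jacobian $3\xi^2$ cancelling the two derivatives, is exactly the Kato smoothing computation, and it is correct. In (ii) the duality step is right, but the Christ--Kiselev detour is both unnecessary and shaky: the source norm $L^1_xL^2_T$ is not of the form $L^p_T(B)$, so the standard Christ--Kiselev lemma does not apply, and the reversed-norm variants impose conditions on the time exponents that you have not checked. None of this is needed. Because the target norm is $L^\infty_TL^2_x$, one fixes $t$, writes $\partial_x\int_0^tU(t-t')g\,dt'=U(t)\,\partial_x\int_0^tU(-t')g\,dt'$, applies the dual of (i) on the interval $[0,t]$ (with constant uniform in $t$), and takes the supremum over $t\in[0,T]$; the retarded cutoff is absorbed for free.

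The genuine gap is in (iii), and it sits exactly where you yourself locate the difficulty. Writing $\chi_{\{t'<t\}}=\tfrac12\bigl(1+\mathrm{sgn}(t-t')\bigr)$, the ``$1$'' piece is the $TT^{*}$ composition of (i) with its dual and is fine. For the $\mathrm{sgn}$ piece, after Minkowski's inequality in the spatial variable of $g$ one is reduced to a family of convolution operators in $t$ with symbols
\begin{equation*}
m_z(\tau)=\mathrm{p.v.}\!\int_{\R} e^{iz\xi}\,\frac{\xi^2}{\tau-\xi^3}\,d\xi ,
\end{equation*}
that is, after $\eta=\xi^3$, the Hilbert transform in $\tau$ of $\eta\mapsto e^{iz\eta^{1/3}}$. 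Your claim that ``the retarded truncation preserves $L^2_T$-boundedness by Calder\'on--Zygmund theory'' is not a valid principle: truncating a convolution kernel to $\{t-t'>0\}$ replaces a bounded symbol by half of itself plus a multiple of its Hilbert transform, and the Hilbert transform does not map $L^\infty$ to $L^\infty$. Likewise, ``controlling the anti-retarded correction through the same local-smoothing identity'' is circular, since that correction is precisely the full time integral minus the retarded one. What is actually required---and what constitutes the technical core of the proof of (iii) in \cite{kpv1}---is the uniform bound $\sup_{z,\tau}|m_z(\tau)|<\infty$, obtained by splitting near and away from the singularity $\eta=\tau$ and exploiting the H\"older continuity and the oscillation of $e^{iz\eta^{1/3}}$ (a van der Corput / integration-by-parts argument for the far region). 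Until that oscillatory-integral estimate is supplied, your proof of (iii) is incomplete.
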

\begin{proof}
See \cite[Lemma 3.5]{kpv1}.
\end{proof}

\begin{lemma}\label{lemma2}
The following inequalities hold.
\begin{itemize}
  \item[(i)] If $u_0\in L^2_x$, then
  $$
   \|U(t)u_0\|_{L^5_xL^{10}_T}\lesssim\|u_0\|_{L^2_x}.
   $$
  \item[(ii)] If $g\in L^{5/4}_xL^{10/9}_t$, then
  $$
  \left\|\int_0^tU(t-t')g(\cdot,t')dt'\right\|_{L^5_TL^{10}_x}\lesssim\|g\|_{ L^{5/4}_xL^{10/9}_T}.
  $$
\end{itemize}
\end{lemma}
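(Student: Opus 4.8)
The plan is to view (i) and (ii) as the homogeneous and retarded inhomogeneous members of the sharp smoothing/Strichartz family for the Airy propagator $U(t)$. A first observation is that (i) sits exactly at the scaling-critical exponent: writing $\|U(t)u_0\|_{L^q_xL^p_T}$, dimensional analysis of $u_0\mapsto u_0(\lambda\,\cdot)$ against the symbol $\tau=\xi^3$ forces the relation $\alpha=\frac1q+\frac3p-\frac12$ for an estimate carrying $\alpha$ spatial derivatives, and $(q,p,\alpha)=(5,10,0)$ satisfies it with no loss of derivatives. This is the structural reason (i) can hold, and it suggests obtaining it by interpolating two endpoint estimates that bracket $(5,10,0)$.

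To prove (i) I would interpolate the analytic family $D_x^{z}U(t)$ between two endpoints. The first is the sharp Kato smoothing bound, namely Lemma~\ref{lemma1}(i), $\|D_xU(t)u_0\|_{L^\infty_xL^2_T}\lesssim\|u_0\|_{L^2_x}$, with exponent triple $(\tfrac1q,\tfrac1p,\alpha)=(0,\tfrac12,1)$. The second is the Kenig--Ponce--Vega maximal-function estimate $\|D_x^{-1/4}U(t)u_0\|_{L^4_xL^\infty_T}\lesssim\|u_0\|_{L^2_x}$ from \cite{kpv1}, with triple $(\tfrac14,0,-\tfrac14)$. Complex interpolation of the mixed-norm spaces $L^q_xL^p_T$ applied to the analytic family $D_x^zU(t)$ (Stein interpolation, which absorbs simultaneously the two integrability exponents and the derivative weight), with parameter $\theta=\tfrac45$, lands precisely on $(\tfrac15,\tfrac1{10},0)$, i.e. on (i). Alternatively one may run a $TT^*$ argument, reducing the bound to the pointwise Airy-kernel decay $|K(x,t)|\lesssim|t|^{-1/3}$ and its oscillatory refinement together with a mixed-norm fractional integration, as in the oscillatory-integral method of \cite{kpv1,kpv2}.

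For (ii) I would start from the dispersive estimate obtained by interpolating the Airy bound $\|U(t)f\|_{L^\infty_x}\lesssim|t|^{-1/3}\|f\|_{L^1_x}$ with the $L^2_x$ isometry, which gives $\|U(t)f\|_{L^{10}_x}\lesssim|t|^{-4/15}\|f\|_{L^{10/9}_x}$. Dualizing (i) produces $\|\int_{\R}U(-t')g(\cdot,t')\,dt'\|_{L^2_x}\lesssim\|g\|_{L^{5/4}_xL^{10/9}_T}$, and composing with $U(t)$ (a $TT^*$ argument) yields the non-retarded bound on $\int_{\R}U(t-t')g(\cdot,t')\,dt'$. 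The causal truncation $\int_0^t$ and the final arrangement into the stated order $L^5_TL^{10}_x$ are then recovered through the Christ--Kiselev lemma---admissible because the time integrability strictly improves from the datum to the output---combined with Minkowski's inequality and the finiteness of $[0,T]$, inside the duality bookkeeping of \cite{kpv1}.

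The main obstacle is precisely the handling of the crossed, non-commuting mixed norms in (ii): the output is measured time-outside, $L^5_TL^{10}_x$, while the datum is measured space-outside, $L^{5/4}_xL^{10/9}_T$, and the inner/outer exponents do not line up in the direction in which Minkowski's inequality would let one transpose one norm into the other. Consequently one cannot simply deduce (ii) by combining the dispersive decay with Hardy--Littlewood--Sobolev, nor by a black-box application of Christ--Kiselev to the space-outside $TT^*$ estimate; the orderings must be reconciled carefully within the oscillatory-integral framework of \cite{kpv1}. A secondary subtlety is that the two endpoints feeding the interpolation for (i) are the sharp $L^\infty$-type smoothing and maximal-function estimates, whose validity at those very endpoints is itself the delicate content borrowed from \cite{kpv1}.
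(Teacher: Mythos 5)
The paper does not prove this lemma; it simply cites Corollary 3.8 of \cite{kpv1}, so your attempt is being measured against that source rather than against an in-paper argument. For part (i) your route is exactly the Kenig--Ponce--Vega one: Stein interpolation of the analytic family $D_x^{z}U(t)$ between the sharp smoothing endpoint $(\tfrac1q,\tfrac1p,\alpha)=(0,\tfrac12,1)$ and the maximal-function endpoint $(\tfrac14,0,-\tfrac14)$, and your arithmetic ($\theta=\tfrac45$ landing on $(\tfrac15,\tfrac1{10},0)$, consistent with the scaling relation $\alpha=\tfrac1q+\tfrac3p-\tfrac12$) is correct; this part is complete modulo the two endpoint estimates, one of which is the paper's own Lemma \ref{lemma1}(i).

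Part (ii), however, is not actually proved. You first assert that Christ--Kiselev is ``admissible because the time integrability strictly improves,'' and then in your closing paragraph correctly observe that the standard Christ--Kiselev lemma does not apply because both the input and output norms carry the time exponent on the inside; these two statements cannot both stand, and you resolve the tension only by deferring to ``the oscillatory-integral framework of \cite{kpv1},'' i.e., to the very citation the paper itself gives. To close the gap you would either invoke the mixed-norm extension of Christ--Kiselev for operators from $L^{q_1}_xL^{q_2}_T$ to $L^{p_1}_xL^{p_2}_T$, which requires $\min(p_1,p_2)>\max(q_1,q_2)$ and is satisfied here since $\min(5,10)=5>\tfrac54=\max(\tfrac54,\tfrac{10}{9})$, or reproduce the direct estimate of the retarded operator in \cite{kpv1} (which predates Christ--Kiselev and does not use it). Two smaller points: the dispersive bound $\|U(t)f\|_{L^{10}_x}\lesssim|t|^{-4/15}\|f\|_{L^{10/9}_x}$ that you derive is never used in your subsequent $TT^{*}$ argument; and the ordering $L^5_TL^{10}_x$ in the statement is almost certainly a typo for $L^5_xL^{10}_T$, which is the norm actually used in Section \ref{localwp} and the one appearing in \cite{kpv1}, so you should not spend effort reconciling an ordering that is not intended.
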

\begin{proof}
See \cite[Corollary 3.8]{kpv1}.
\end{proof}

\begin{lemma}\label{lemma3}
If $g\in\mathcal{S}(\R^2)$, then
$$
\|g\|_{L^{5k/4}_xL^{5k/2}_t}\lesssim \left\|D^{\alpha_k}_xD^{\beta_k}_tg\right\|_{L^{p_k}_xL^{q_k}_t},
$$
where $\alpha_k,\beta_k,p_k$, and $q_k$ are defined as in \eqref{a4}-\eqref{a5}.
\end{lemma}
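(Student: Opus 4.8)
The plan is to recognize the claimed inequality as an anisotropic (mixed-norm) Sobolev embedding and to reduce it to two one-dimensional Hardy--Littlewood--Sobolev inequalities, one in each variable. First I would record the two exponent identities that make this possible. Writing $r_1=5k/4$ and $r_2=5k/2$ for the target exponents, a direct computation with \eqref{a4}--\eqref{a5} gives
\begin{equation*}
\frac{1}{r_1}=\frac{1}{p_k}-\alpha_k, \qquad \frac{1}{r_2}=\frac{1}{q_k}-\beta_k,
\end{equation*}
and for every $k>4$ one checks that $0<\alpha_k<1$, $0<\beta_k<1$, $1<p_k<r_1<\infty$ and $1<q_k<r_2<\infty$. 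These are exactly the hypotheses needed to run the sharp fractional Sobolev embedding separately in $x$ and in $t$.

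Next I would invert the fractional derivatives. Setting $f=D_x^{\alpha_k}D_t^{\beta_k}g$, the Fourier multiplier identity $|\xi|^{-\alpha_k}|\tau|^{-\beta_k}\cdot|\xi|^{\alpha_k}|\tau|^{\beta_k}=1$ (valid off the null set $\{\xi=0\}\cup\{\tau=0\}$) lets me represent $g$, for $g\in\mathcal{S}(\R^2)$, as an iterated Riesz potential
\begin{equation*}
g(x,t)=c\iint\frac{f(y,s)}{|x-y|^{1-\alpha_k}\,|t-s|^{1-\beta_k}}\,dy\,ds,
\end{equation*}
that is, $g=I^x_{\alpha_k}I^t_{\beta_k}f$, where $I^x_{\alpha_k}$ and $I^t_{\beta_k}$ denote the one-dimensional Riesz potentials acting in the $x$- and $t$-variable respectively.

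I would then estimate the mixed norm by peeling off one variable at a time. Fixing $x$ and applying Minkowski's integral inequality in the $t$-variable (legitimate since $r_2\geq1$) moves the $L^{r_2}_t$ norm inside the $x$-convolution, after which the one-dimensional Hardy--Littlewood--Sobolev inequality in $t$ gives, pointwise in $y$,
\begin{equation*}
\big\|(I^t_{\beta_k}f)(y,\cdot)\big\|_{L^{r_2}_t}\lesssim\|f(y,\cdot)\|_{L^{q_k}_t}=:H(y).
\end{equation*}
This yields $\|g(x,\cdot)\|_{L^{r_2}_t}\lesssim (I^x_{\alpha_k}H)(x)$, and taking the $L^{r_1}_x$ norm and applying the one-dimensional Hardy--Littlewood--Sobolev inequality in $x$ gives
\begin{equation*}
\|g\|_{L^{r_1}_xL^{r_2}_t}\lesssim\|I^x_{\alpha_k}H\|_{L^{r_1}_x}\lesssim\|H\|_{L^{p_k}_x}=\|f\|_{L^{p_k}_xL^{q_k}_t},
\end{equation*}
which is the desired bound since $f=D_x^{\alpha_k}D_t^{\beta_k}g$.

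The only genuinely delicate point is the correct handling of the two mixed norms: because $D^{\alpha_k}_x$ is nonlocal it cannot simply be commuted past the $L^{r_2}_t$ norm, so the use of Minkowski's integral inequality to interchange the inner time-norm with the spatial Riesz potential is what makes the iteration close. Everything else is bookkeeping --- verifying the exponent identities and the admissible ranges recorded above, and invoking the classical one-dimensional Hardy--Littlewood--Sobolev theorem; the restriction to $g\in\mathcal{S}(\R^2)$ guarantees that the Riesz-potential representation of $g$ and all the integrals above are well defined.
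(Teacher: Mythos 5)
Your argument is correct: the exponent identities $1/r_1=1/p_k-\alpha_k$ and $1/r_2=1/q_k-\beta_k$ do hold with $0<\alpha_k,\beta_k<1$ and $1<p_k<r_1$, $1<q_k<r_2$ for $k>4$, and the iteration (Riesz-potential representation, Minkowski in $t$ to pass the inner norm through the $x$-convolution, then one-dimensional Hardy--Littlewood--Sobolev in each variable) closes exactly as you describe. The paper itself offers no proof, merely citing Lemma 3.15 of Kenig--Ponce--Vega, and your double application of the one-dimensional fractional integration theorem is essentially the standard argument behind that cited result, so there is nothing to add.
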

\begin{proof}
See \cite[Lemma 3.15]{kpv1}.
\end{proof}

\begin{lemma}\label{lemma4}
Let $s_k, \alpha_k, \beta_k, p_k$, and $q_k$ be as in Theorem \ref{local}. Let $\alpha\geq0$, $\beta\geq0$,
and $u_0\in\mathcal{S}(\R)$. Then
$$
\left\|D^\alpha_xD^{\beta/3}D^{\alpha_k}_xD^{\beta_k}_tU(t)u_0\right\|_{L^{p_k}_xL^{q_k}_t}
\lesssim \left\|D^{\alpha+\beta}_xD^{s_k}_xu_0\right\|_{L^2_x}.
$$
\end{lemma}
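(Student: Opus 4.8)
The plan is to prove Lemma~\ref{lemma4} by reducing the estimate to the sharp Kato-type smoothing bound in Lemma~\ref{lemma1}(i) after carefully tracking how all the derivatives combine on the Fourier side. The key observation is that $U(t)u_0$ has space-time Fourier support on the characteristic surface $\tau=\xi^3$, so temporal derivatives can be exchanged for spatial derivatives via the relation $\partial_t \leftrightarrow -\partial_x^3$ when acting on $U(t)u_0$. Concretely, since $D^{\beta_k}_t U(t)u_0$ corresponds to the multiplier $|\tau|^{\beta_k}$ evaluated on $\tau=\xi^3$, which equals $|\xi|^{3\beta_k}$, and similarly $D^{\beta/3}_t U(t)u_0$ becomes $|\xi|^{\beta}$ in frequency, all the $t$-derivatives collapse into pure spatial Fourier multipliers. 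Thus the left-hand side reduces to an expression of the form $\|D^{\rho}_x U(t)u_0\|_{L^{p_k}_x L^{q_k}_t}$ for an explicit total spatial order $\rho = \alpha + \beta + \alpha_k + 3\beta_k$ (using $\beta_k$ and $\alpha_k$ from \eqref{a4}).

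First I would record this frequency-localization reduction precisely, so that the two-variable mixed-norm estimate becomes a statement purely about the linear group with a single spatial derivative of explicit order acting on $u_0$. Next I would invoke Lemma~\ref{lemma1}(i), which gives $\|\partial_x U(t)u_0\|_{L^\infty_x L^2_T}\lesssim\|u_0\|_{L^2_x}$, as the endpoint building block, and combine it with Lemma~\ref{lemma2}(i), the Strichartz estimate $\|U(t)u_0\|_{L^5_xL^{10}_T}\lesssim\|u_0\|_{L^2_x}$. The exponents $(p_k,q_k)$ in \eqref{a5} are engineered to be an interpolation of the smoothing pair $(\infty,2)$ and the Strichartz pair $(5,10)$; indeed one checks that $(1/p_k,1/q_k)$ lies on the segment joining $(0,1/2)$ and $(1/5,1/10)$, so the desired mixed-norm estimate with the derivative gain follows by a standard Stein–Weiss/Riesz-Thorin interpolation between the two endpoint bounds, with the order-$s_k$ derivative accounting exactly for the smoothing gain at the interpolated point.

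The main technical obstacle will be the bookkeeping in the interpolation step: I must verify that the total spatial derivative count $\alpha+\beta+\alpha_k+3\beta_k$ on the left matches precisely the count $\alpha+\beta+s_k$ appearing on the right after interpolation, i.e.\ that the smoothing gain delivered at the interpolated exponents $(p_k,q_k)$ is exactly $s_k + \alpha_k + 3\beta_k$ derivatives worth. Using \eqref{a4}, one computes $\alpha_k + 3\beta_k = \left(\tfrac{1}{10}-\tfrac{2}{5k}\right) + 3\left(\tfrac{3}{10}-\tfrac{6}{5k}\right) = 1 - \tfrac{4}{k}$, while $s_k = \tfrac12 - \tfrac2k$, so the arithmetic needs to reconcile these with the derivative gain obtained from interpolating the $(\infty,2)$ smoothing estimate (one derivative) against the derivative-free $(5,10)$ Strichartz estimate. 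I would handle this by writing the interpolation with a parameter $\theta$ determined by $(1/p_k,1/q_k)=(1-\theta)(0,1/2)+\theta(1/5,1/10)$, solving for $\theta$, and confirming that the resulting spatial derivative gain is $(1-\theta)\cdot 1$, then matching the totals; since the exponents were constructed in \cite{kpv1} precisely to make this identity hold, the computation is routine once the reduction and the choice of endpoints are in place. Finally, the density of $\mathcal{S}(\R)$ and the translation-invariance of all estimates in $u_0$ let me extend from Schwartz data and commute the extra multipliers $D^\alpha_x D^{\beta/3}_t$ freely past $U(t)$.
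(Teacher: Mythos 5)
Your argument is correct and is essentially the proof the paper is pointing to: the paper gives no proof of its own but cites Lemmas 3.14 and 3.16 of Kenig--Ponce--Vega, where exactly your two steps appear --- converting $D_t^{\gamma}U(t)u_0$ into $D_x^{3\gamma}U(t)u_0$ via the dispersion relation $\tau=\xi^3$, and then interpolating the gain-one smoothing estimate of Lemma \ref{lemma1}(i) against the Strichartz estimate of Lemma \ref{lemma2}(i); your exponent arithmetic ($\theta=\tfrac12+\tfrac2k$, derivative gain $1-\theta=s_k$, and $\alpha_k+3\beta_k-s_k=s_k$) checks out. The only imprecision is the name of the interpolation theorem: since the two endpoint bounds involve different operators ($D_xU(t)$ versus $U(t)$), plain Riesz--Thorin does not apply and one must use Stein's analytic-family interpolation for $T_z=D_x^{z}U(t)$ in the mixed-norm setting, which is harmless here because $D_x^{iy}$ is an $L^2$-isometry and both endpoint estimates hold uniformly on the lines $\mathrm{Re}\, z=0$ and $\mathrm{Re}\, z=1$.
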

\begin{proof}
See \cite[Lemmas 3.14 and 3.16]{kpv1}.
\end{proof}

\begin{lemma}\label{lemma5}
Let $s\geq s_k$. Let $p_k$ and $q_k$ be as in Theorem \ref{local}. The following estimate is fulfilled
\begin{equation*}
\begin{split}
\left\|D^s_x(u^k\partial_xu)\right\|_{L^{5/4}_xL^{10/9}_t}\lesssim &
\left\|D^{\alpha_k}_xD^{\beta_k}_tu\right\|_{L^{p_k}_xL^{q_k}_t}^k\|D^s_x\partial_xu\|_{L^\infty_xL^2_t}\\
&+ \left\|D^{\alpha_k}_xD^{\beta_k}_tu\right\|_{L^{p_k}_xL^{q_k}_t}^{k-1}\|u\|_{L^{5k/4}_xL^{5k/2}_t}
\|D^s_x\partial_xu\|_{L^\infty_xL^2_t}.
\end{split}
\end{equation*}
\end{lemma}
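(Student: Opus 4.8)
The plan is to prove this nonlinear estimate by combining the Leibniz and chain rules for the fractional derivative $D^s_x$ with H\"older's inequality in the anisotropic norms $L^p_xL^q_t$, organizing the computation so that the top-order derivatives always land on a single factor and are measured in the smoothing norm $\|D^s_x\partial_x u\|_{L^\infty_xL^2_t}$, while the remaining factors are measured in $\|u\|_{L^{5k/4}_xL^{5k/2}_t}$, which Lemma \ref{lemma3} then converts into $\|D^{\alpha_k}_xD^{\beta_k}_tu\|_{L^{p_k}_xL^{q_k}_t}$. Concretely, I view $u^k\partial_xu$ as the $(k+1)$-fold product of $k$ copies of $u$ and one copy of $\partial_xu$, and iterate the fractional Leibniz rule (in the mixed-norm form from the appendix of \cite{kpv1}) to distribute $D^s_x$ over the factors. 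The two leading contributions arise by letting $D^s_x$ fall entirely on one factor:
\[
\|D^s_x(u^k\partial_xu)\|_{L^{5/4}_xL^{10/9}_t}\lesssim \underbrace{\|u^k\,D^s_x\partial_xu\|_{L^{5/4}_xL^{10/9}_t}}_{(\mathrm{I})}+\underbrace{\|u^{k-1}(D^s_xu)(\partial_xu)\|_{L^{5/4}_xL^{10/9}_t}}_{(\mathrm{II})}+(\text{cross terms}).
\]

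For $(\mathrm{I})$, in which $D^s_x$ hits $\partial_xu$, I would place the $k$ copies of $u$ in $L^{5k/4}_xL^{5k/2}_t$ and the factor $D^s_x\partial_xu$ in $L^\infty_xL^2_t$. The H\"older exponents close, since $k\cdot\frac{4}{5k}=\frac45=\frac{1}{5/4}$ in the $x$ variable and $k\cdot\frac{2}{5k}+\frac12=\frac{9}{10}=\frac{1}{10/9}$ in the $t$ variable. This gives $\|u\|_{L^{5k/4}_xL^{5k/2}_t}^{\,k}\,\|D^s_x\partial_xu\|_{L^\infty_xL^2_t}$, and applying Lemma \ref{lemma3} to each of the $k$ factors produces exactly the first term on the right-hand side of the statement.

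For $(\mathrm{II})$, in which $D^s_x$ hits one copy of $u$, the fractional chain rule reduces matters to controlling the product $u^{k-1}(D^s_xu)(\partial_xu)$ \emph{as a single product}, without splitting $D^s_xu$ off from $\partial_xu$ by H\"older first. I isolate the $k-1$ undifferentiated copies of $u$ in $L^{5k/4}_xL^{5k/2}_t$ (again passing to $\|D^{\alpha_k}_xD^{\beta_k}_tu\|_{L^{p_k}_xL^{q_k}_t}^{\,k-1}$ via Lemma \ref{lemma3}), which leaves the pair $(D^s_xu)(\partial_xu)$ in the residual norm $L^{5k/4}_xL^{10k/(5k+4)}_t$. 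The decisive observation is that $D^s_xu$ and $\partial_xu$ together carry $s+1$ derivatives, the same total as the product $u\cdot D^s_x\partial_xu$; splitting by H\"older and interpolating each factor (a Gagliardo--Nirenberg inequality in the space--time norms) between the endpoints $\|u\|_{L^{5k/4}_xL^{5k/2}_t}$ (order $0$) and $\|D^s_x\partial_xu\|_{L^\infty_xL^2_t}$ (order $s+1$), with parameters $\theta=\frac{s}{s+1}$ for $D^s_xu$ and $\theta'=\frac{1}{s+1}$ for $\partial_xu$, redistributes them into exactly one factor of each (the powers add up to $(1-\theta)+(1-\theta')=1$ and $\theta+\theta'=1$). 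Collecting exponents yields the second term $\|D^{\alpha_k}_xD^{\beta_k}_tu\|_{L^{p_k}_xL^{q_k}_t}^{\,k-1}\|u\|_{L^{5k/4}_xL^{5k/2}_t}\|D^s_x\partial_xu\|_{L^\infty_xL^2_t}$.

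The main obstacle I expect is the derivative-redistribution step in $(\mathrm{II})$, performed in the anisotropic $L^p_xL^q_t$ norms rather than on a single $L^p$. One must check that the H\"older exponents close at every splitting and that the fractional Leibniz rule, the fractional chain rule, and the interpolation inequality are all available in the mixed space--time setting for the precise exponents forced by the left-hand norm $L^{5/4}_xL^{10/9}_t$ and the smoothing norm $L^\infty_xL^2_t$; these are exactly the fractional-calculus estimates established in the appendix of \cite{kpv1}. Finally, one must account for the cross terms, in which the $s$ derivatives are shared among several factors. Since these distributions are strictly more balanced than the two extreme placements treated above, the same H\"older plus interpolation bookkeeping dominates them by the two expressions already obtained, so no new terms appear and the stated estimate follows.
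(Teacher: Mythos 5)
Your proposal is correct and follows essentially the same route as the source: the paper defers this lemma to Proposition 6.1 of \cite{kpv1}, but the argument there (reproduced in this paper's proof of Lemma \ref{NLEL}) is exactly your combination of the fractional Leibniz and chain rules of Lemma \ref{lemma6} with H\"older in the mixed norms, Lemma \ref{lemma3} for the undifferentiated factors, and the interpolation inequalities \eqref{LI11}--\eqref{LI22} with exponents \eqref{P2P3} and $\theta_1=\tfrac{s}{1+s}$, $\theta_2=\tfrac{1}{1+s}$ to redistribute the $s+1$ derivatives in the term $u^{k-1}(D^s_xu)(\partial_xu)$. Your exponent bookkeeping checks out, and the ``cross terms'' you worry about do not in fact arise beyond the commutator remainder, which is controlled by the same two expressions.
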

\begin{proof}
See proof of Proposition 6.1 in \cite{kpv1}.
\end{proof}

\begin{lemma}\label{lemma6}
Let $0<\alpha<1$ and $p,p_1,p_2,q,q_1,q_2 \in (1,\infty)$ with
$\frac{1}{p}=\frac{1}{p_1}+\frac{1}{p_2}$ and
$\frac{1}{q}=\frac{1}{q_1}+\frac{1}{q_2}$. Then,
\begin{itemize}
    \item[(i)] $$
\|D^\alpha_x(fg)-fD^\alpha_xg-gD^\alpha_xf\|_{L^p_xL^q_T}\lesssim
\|D^\alpha_xf\|_{L^{p_1}_xL^{q_1}_T}\|g\|_{L^{p_2}_xL^{q_2}_T}.
$$
The same still holds if $p=1$ and $q=2$.
    \item[(ii)] $$\|D^\alpha_xF(f)\|_{L^p_xL^q_T}\lesssim
    \|D^\alpha_xf\|_{L^{p_1}_xL^{q_1}_T}\|F'(f)\|_{L^{p_2}_xL^{q_2}_T}$$
  \end{itemize}
\end{lemma}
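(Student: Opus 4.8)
The plan is to regard both inequalities as one-dimensional statements in the variable $x$, in which the time variable is a passive parameter: since $D^\alpha_x$ acts only on $x$, I would first establish the corresponding fixed-structure estimate in $x$ and then recover the mixed norm $L^p_xL^q_T$ by a vector-valued upgrade (with the space $L^q_T$ as the target Banach space). I therefore concentrate on the spatial mechanism in each part and return to the mixed-norm passage at the end.

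For part (i), write
\[
B(f,g):=D^\alpha_x(fg)-fD^\alpha_xg-gD^\alpha_xf ,
\]
a bilinear Fourier multiplier operator in $x$ whose symbol, acting on $\hat f(\xi)\hat g(\eta)$, is $\sigma(\xi,\eta)=|\xi+\eta|^\alpha-|\xi|^\alpha-|\eta|^\alpha$. Substituting $\hat f(\xi)=|\xi|^{-\alpha}\widehat{D^\alpha_xf}(\xi)$ recasts $B$ as a bilinear operator applied to $(D^\alpha_xf,g)$ with renormalized symbol $\widetilde\sigma(\xi,\eta)=\sigma(\xi,\eta)/|\xi|^\alpha$. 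The decisive structural fact is that for $0<\alpha<1$ this symbol is bounded and, after a Littlewood--Paley paraproduct decomposition in $x$ into the regions $|\xi|\ll|\eta|$, $|\eta|\ll|\xi|$, and $|\xi|\sim|\eta|$, behaves like a Coifman--Meyer symbol of order zero; the subtraction of $|\xi|^\alpha$ and $|\eta|^\alpha$ is exactly what cancels the would-be singular contributions at $\xi=0$ and at $\eta=0$. Away from the set $\{\xi+\eta=0\}$ the Coifman--Meyer multiplier theorem then gives boundedness $L^{p_1}_x\times L^{p_2}_x\to L^p_x$ for $1<p,p_1,p_2<\infty$ with $1/p=1/p_1+1/p_2$. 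I expect the genuinely delicate point to be the high-high interaction near $\xi+\eta=0$, where $|\xi+\eta|^\alpha$ fails to be smooth; there I would run an additional dyadic decomposition of the output frequency and sum the contributions using the elementary bound $\||\zeta|^\alpha P_j h\|\lesssim 2^{j\alpha}\|P_jh\|$ together with $0<\alpha<1$.

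For part (ii), I would use the pointwise representation valid for $0<\alpha<1$,
\[
D^\alpha_xh(x)=c_\alpha\int\frac{h(x)-h(y)}{|x-y|^{1+\alpha}}\,dy ,
\]
applied to $h=F(f)$. Writing $F(f(x))-F(f(y))=F'(f(x))\,(f(x)-f(y))+R(x,y)$ isolates a leading term whose contribution is exactly $F'(f(x))\,D^\alpha_xf(x)$, since $c_\alpha\int (f(x)-f(y))\,|x-y|^{-1-\alpha}\,dy=D^\alpha_xf(x)$. Taking $L^p_x$ and using H\"older gives the desired bound $\|D^\alpha_xf\|_{L^{p_1}_x}\|F'(f)\|_{L^{p_2}_x}$ for this piece. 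The remainder $R(x,y)$, coming from the nonlinearity of $F$, produces a more singular kernel of the type $|f(x)-f(y)|^2|x-y|^{-1-\alpha}$; I would control it by a dyadic splitting of $\R$ in $|x-y|$ and the Hardy--Littlewood/Fefferman--Stein maximal inequality, recovering the same product bound. This is the Christ--Weinstein fractional chain rule, and estimating $R$ is the technical heart of part (ii).

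Finally, both spatial estimates pass to the mixed norm $L^p_xL^q_T$ because $D^\alpha_x$ ignores $t$: in part (i) one invokes the $L^q_T$-valued version of the Coifman--Meyer theorem (valid since $L^q_T$ is a UMD space for $1<q<\infty$), and in part (ii) the pointwise identity holds for a.e.\ $t$, after which Minkowski's inequality in $t$ and H\"older in both variables close the estimate. The main obstacle overall is the endpoint case $p=1$, $q=2$ singled out in the statement, which lies outside the range of the Coifman--Meyer theorem; here I would abandon the multiplier theorem and argue directly, exploiting that $L^2_T$ is a Hilbert space (so that square-function and almost-orthogonality arguments from the paraproduct decomposition apply) together with kernel estimates or $H^1$--$\mathrm{BMO}$ duality, as in Kenig--Ponce--Vega \cite{kpv1}.
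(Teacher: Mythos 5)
The paper does not actually prove this lemma: its ``proof'' is the citation of Theorems A.6, A.8 and A.13 of Kenig--Ponce--Vega \cite{kpv1}, so your attempt is necessarily an independent reconstruction. For part (i) your paraproduct strategy is essentially the standard modern proof of the fractional Leibniz rule, but note that the renormalized symbol $\widetilde\sigma(\xi,\eta)=\bigl(|\xi+\eta|^\alpha-|\xi|^\alpha-|\eta|^\alpha\bigr)/|\xi|^\alpha$ is \emph{not} literally a Coifman--Meyer symbol: in the region $|\xi|\ll|\eta|$ its $\xi$-derivatives are of size $|\xi|^{-1}(|\xi|/|\eta|)^{1-\alpha}$ rather than $(|\xi|+|\eta|)^{-1}$. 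What rescues the argument is precisely the gain $(2^{j}/2^{k})^{1-\alpha}$ on each dyadic block $|\xi|\sim 2^{j}\ll 2^{k}\sim|\eta|$, which makes the paraproduct sum converge; this must be carried out by hand rather than quoted as a black box. Likewise the $L^q_T$-valued upgrade of a \emph{bilinear} multiplier theorem does not follow from UMD alone the way it does for linear Calder\'on--Zygmund operators, and the endpoint $p=1$, $q=2$ indeed requires the separate Hilbert-space argument you indicate. These are repairable, if nontrivial, omissions.

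The genuine gap is in part (ii). Your second-order Taylor expansion $F(f(x))-F(f(y))=F'(f(x))(f(x)-f(y))+R(x,y)$ with $|R(x,y)|\lesssim|f(x)-f(y)|^2$ forces you to estimate $\int|f(x)-f(y)|^{2}|x-y|^{-1-\alpha}\,dy$, which is the \emph{square} of the half-order Stein derivative of $f$ at $x$, weighted by $\sup|F''|$ on the range of $f$. No maximal-function or dyadic splitting converts this into $\|D^\alpha_xf\|_{L^{p_1}}\|F'(f)\|_{L^{p_2}}$: the quantity is quadratic in increments of $f$ while the claimed bound is linear in $D^\alpha_xf$, and the weight it produces involves $F''(f)$ rather than $F'(f)$; for $F(u)=u^{k+1}$ the resulting factors $\|f^{k-1}\|\,\|f\|$ cannot be reassembled into $\|f^{k}\|_{L^{p_2}}$ by H\"older, since the reverse H\"older inequality fails. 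The Christ--Weinstein/KPV proof avoids the expansion entirely: it uses only the first-order bound $|F(a)-F(b)|\lesssim(|F'(a)|+|F'(b)|)\,|a-b|$ (valid for the power nonlinearities at hand, where $|F'|$ is monotone in $|u|$) inside the quadratic functional $\mathcal{D}^{\alpha}F(f)(x)=\bigl(\int|F(f(x))-F(f(y))|^{2}|x-y|^{-1-2\alpha}\,dy\bigr)^{1/2}$, together with the equivalence $\|D^\alpha h\|_{L^p}\sim\|\mathcal{D}^{\alpha}h\|_{L^p}$ for $0<\alpha<1$ and $1<p<\infty$. You would need to replace your treatment of the remainder by that argument (and record the implicit hypotheses on $F$ it requires) for part (ii) to close.
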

\begin{proof}
See  \cite[Theorems A.6, A.8, and A.13]{kpv1}.\\
\end{proof}

Next, we introduce some tools to prove a variant of Theorem \ref{local}. These tools will be used in Section \ref{impglobal}.

We shall take advantage of the Strichartz estimates (see  \cite{KPV4})
\begin{equation}\label{L8}
\|u\|_{L^8_{x,t}}\lesssim \|u\|_{X_{0,\frac{1}{2}+}}
\end{equation}
and
\begin{equation*}
\|u\|_{L^{5}_{x}L^{10}_{t}}\lesssim \|u\|_{X_{0,\frac{1}{2}+}}.
\end{equation*}
By duality
\begin{equation*}
\|u\|_{X_{0,-\frac{1}{2}-}}\lesssim \|u\|_{L^{5/4}_{x}L^{10/9}_{t}},
\end{equation*}
which interpolated with the trivial estimate
\begin{equation*}
\|u\|_{X_{0,0}}\lesssim \|u\|_{L^2_{x,t}}
\end{equation*}
yields
\begin{equation}\label{L6}
\|u\|_{X_{0,-\frac{1}{2}++}}\lesssim \|u\|_{L^{5/4+}_{x}L^{10/9+}_{t}},
\end{equation}
where $a++=a+2\varepsilon$ for sufficiently small $\varepsilon>0$.

Interpolation between \eqref{L8} and $\|u\|_{L^{\infty}_{x,t}}\lesssim \|u\|_{X_{\frac{1}{2}+,\frac{1}{2}+}}$ gives us
\begin{equation}\label{LP}
\|u\|_{L^p_{x,t}}\lesssim \|u\|_{X_{\alpha(p),\frac{1}{2}+}},
\end{equation}
where $p>8$ and $\alpha(p)=\left(\dfrac{1}{2}+\right)\left(\dfrac{p-8}{p}\right)$.

Recall that from Lemmas \ref{lemma3} and \ref{lemma4},
\begin{equation*}
\left\|D^{\alpha_k}_xD^{\beta_k}_tu\right\|_{L^{p_k}_xL^{q_k}_t}\lesssim \left\|u\right\|_{X_{s_k,\frac{1}{2}+}}
\end{equation*}
and
\begin{equation*}\label{LI}
\|u\|_{L^{5k/4}_xL^{5k/2}_t}\lesssim \left\|D^{\alpha_k}_xD^{\beta_k}_tu\right\|_{L^{p_k}_xL^{q_k}_t}.
\end{equation*}
Interpolating, respectively, with $\|u\|_{L^{\infty}_{x,t}}\lesssim \|u\|_{X_{\frac{1}{2}+,\frac{1}{2}+}}$ and
$\|u\|_{L^{\infty}_{x,t}}\lesssim \|u\|_{L^{\infty}_{x,t}}$ we obtain
\begin{equation}\label{LI1}
\left\|D^{\alpha_k-}_xD^{\beta_k-}_tu\right\|_{L^{p_k+}_xL^{q_k+}_t}\lesssim \left\|u\right\|_{X_{s_k+,\frac{1}{2}+}}
\end{equation}
and
\begin{equation}\label{LI2}
\|u\|_{L^{5k/4+}_xL^{5k/2+}_t}\lesssim \left\|D^{\alpha_k-}_xD^{\beta_k-}_tu\right\|_{L^{p_k+}_xL^{q_k+}_t}.
\end{equation}
Moreover, by interpolation it follows that
\begin{equation}\label{LI11}
\|D^{s}_x u\|_{L^{p_3}_xL^{q_3}_t}\le c\|u\|_{L^{5k/4}_xL^{5k/2}_T}^{1-\theta_1}\|D^s_x\partial_xu\|_{L^{\infty}_xL^{2}_T}^{\theta_1}
\end{equation}
and
\begin{equation}\label{LI22}
\|\partial_x u\|_{L^{p_2}_xL^{q_2}_t}\le c\|u\|_{L^{5k/4}_xL^{5k/2}_T}^{1-\theta_2}\|D^s_x\partial_xu\|_{L^{\infty}_xL^{2}_T}^{\theta_2},
\end{equation}
where
\begin{equation}\label{P2P3}
\frac{1}{p_2}+\frac{1}{p_3}=\frac{4}{5k} \;\;\;\text{and}\;\;\; \frac{1}{q_2}+\frac{1}{q_3}=\frac12+\frac{2}{5k}
\end{equation}
and $\theta_1=\dfrac{s}{1+s}$ and $\theta_2=\dfrac{1}{1+s}$,  both $\theta_1$ and $\theta_2$ are in
(0,1) and $\theta_1+\theta_2=1$.

Finally, we have the following refined Strichartz estimate in the case of differing frequencies
(see Bourgain \cite{B1} and Gr\"unrock \cite{Gr05}).

\begin{lemma}\label{L3}
Let $\psi_1, \psi_2 \in X_{0,\frac{1}{2}+}$ be supported on spatial frequencies $|\xi_i|\sim N_i$, $i=1,2$.
If $\max\{|\xi_1|,|\xi_2|\} \lesssim \min\left\{|\xi_1-\xi_2|,|\xi_1+\xi_2|\right\}$ for all
$\xi_i\in \textrm{supp}(\widehat{\psi}_i)$, $i=1,2$, then
\begin{equation}\label{GRU}
\|\psi_1D_x\psi_2\|_{L^2_{x,t}}\lesssim \|\psi_1\|_{X_{0,\frac{1}{2}+}}
 \|\psi_2\|_{X_{0,\frac{1}{2}+}}.
\end{equation}
\end{lemma}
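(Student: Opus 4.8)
The plan is to reduce the bilinear estimate to the corresponding statement for free solutions of the linear KdV equation, and then to prove that free-wave estimate by computing the space-time Fourier transform of the product explicitly, extracting the decisive gain from the resonance function $\xi_1^3+\xi_2^3$.

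First I would record the transfer principle adapted to the $X_{0,\frac12+}$ structure. Writing $\lambda=\tau-\xi^3$ and $\widehat{g^{(i)}_\lambda}(\xi)=\widetilde{\psi_i}(\xi,\xi^3+\lambda)$, one has $\psi_i(x,t)=\int e^{it\lambda}U(t)g^{(i)}_\lambda(x)\,d\lambda$, with $\|g^{(i)}_\lambda\|_{L^2_x}^2=\int|\widetilde{\psi_i}(\xi,\xi^3+\lambda)|^2\,d\xi$ and, crucially, the spatial frequency support of $g^{(i)}_\lambda$ still contained in $|\xi|\sim N_i$. Substituting both representations into $\psi_1D_x\psi_2$, the modulation factors $e^{it\lambda_i}$ have modulus one, so Minkowski's inequality gives
$$\|\psi_1D_x\psi_2\|_{L^2_{x,t}}\le\int\!\!\int\|U(t)g^{(1)}_{\lambda_1}\,D_xU(t)g^{(2)}_{\lambda_2}\|_{L^2_{x,t}}\,d\lambda_1\,d\lambda_2.$$
A Cauchy--Schwarz in each $\lambda_i$ against the integrable weight $\langle\lambda_i\rangle^{-1-}$ then converts $\int\|g^{(i)}_{\lambda_i}\|_{L^2}\,d\lambda_i$ into $\|\psi_i\|_{X_{0,\frac12+}}$, so it remains to prove the free-wave estimate $\|U(t)g_1\,D_xU(t)g_2\|_{L^2_{x,t}}\lesssim\|g_1\|_{L^2}\|g_2\|_{L^2}$ under the same frequency hypothesis.

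For the free-wave estimate I would compute the space-time Fourier transform of $u_1D_xu_2$, where $u_i=U(t)g_i$. Setting $G_i=\widehat{g_i}$, this transform is the inverse Fourier transform of the pushforward, under $\Phi:(\xi_1,\xi_2)\mapsto(\xi_1+\xi_2,\,\xi_1^3+\xi_2^3)$, of the density $|\xi_2|\,G_1(\xi_1)G_2(\xi_2)$. The Jacobian of $\Phi$ is $J=3(\xi_2^2-\xi_1^2)=3(\xi_1-\xi_2)(\xi_1+\xi_2)$, so that
$$\widetilde{u_1D_xu_2}(\xi,\tau)=\sum_{\Phi(\xi_1,\xi_2)=(\xi,\tau)}\frac{|\xi_2|\,G_1(\xi_1)G_2(\xi_2)}{|J(\xi_1,\xi_2)|}.$$
On the frequency support the pair $\{\xi_1,\xi_2\}$ is determined from $(\xi,\tau)$ up to a transposition, so there are at most two preimages; after a Cauchy--Schwarz over them, Plancherel together with the change of variables $d\xi\,d\tau=|J|\,d\xi_1\,d\xi_2$ gives
$$\|u_1D_xu_2\|_{L^2_{x,t}}^2\lesssim\int\!\!\int\frac{|\xi_2|^2\,|G_1(\xi_1)|^2|G_2(\xi_2)|^2}{3|\xi_1-\xi_2||\xi_1+\xi_2|}\,d\xi_1\,d\xi_2.$$

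The heart of the matter is the final pointwise bound on the integrand. The hypothesis $\max\{|\xi_1|,|\xi_2|\}\lesssim\min\{|\xi_1-\xi_2|,|\xi_1+\xi_2|\}$ forces $|\xi_1-\xi_2||\xi_1+\xi_2|\gtrsim\max\{|\xi_1|,|\xi_2|\}^2\ge|\xi_2|^2$, which is precisely what is needed to absorb the factor $|\xi_2|^2$ produced by $D_x$; thus the integrand is $\lesssim|G_1(\xi_1)|^2|G_2(\xi_2)|^2$ and the right-hand side is $\lesssim\|g_1\|_{L^2}^2\|g_2\|_{L^2}^2$, as required. I expect the main technical obstacle to be the honest justification of the coarea/change-of-variables step and the bookkeeping of the (at most two) preimages of $\Phi$ --- in particular verifying that the degenerate set $\xi_1=\pm\xi_2$, where $J$ vanishes, is excluded by the separation hypothesis, which forces $|\xi|=|\xi_1+\xi_2|$ to be comparable to the individual frequencies. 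The transfer step, by contrast, is routine once the weight $\langle\lambda\rangle^{\frac12+}$ is used to pay for the $\lambda$-integrations.
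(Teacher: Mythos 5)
Your argument is correct and is essentially the proof given in the references the paper cites for this lemma (the paper itself only points to \cite[Lemma 2.1]{LG6}, which in turn rests on Gr\"unrock's bilinear Airy estimate \cite{Gr05}): reduction to free solutions via the standard $X_{s,b}$ transfer principle, explicit computation of the space--time Fourier transform of the product with the Jacobian $3(\xi_1-\xi_2)(\xi_1+\xi_2)$ from the resonance function, and absorption of the derivative by the lower bound $|\xi_1-\xi_2||\xi_1+\xi_2|\gtrsim\max\{|\xi_1|,|\xi_2|\}^2$ supplied by the separation hypothesis. The only point worth making explicit in a write-up is the two-sheeted change of variables $d\xi\,d\tau=|J|\,d\xi_1\,d\xi_2$, which you have already flagged and which causes no difficulty since the hypothesis keeps the support away from the degenerate set $\xi_1=\pm\xi_2$.
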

\begin{proof} See \cite[Lemma 2.1]{LG6}.
\end{proof}

We now give some useful notation for multilinear expressions. If $n \geq 2$ is an even integer, we define a
(spatial) $n$-multiplier to be any function $M_n(\xi_1,\dots,\xi_n)$ on the hyperplane
$$\Gamma_n\equiv\{ (\xi_1,\dots,\xi_n)\in \R^n : \xi_1+\cdots+\xi_n=0\},$$
which we endow with the standard measure $\delta(\xi_1+\cdots+\xi_n)$, where $\delta$ is the Dirac delta.

If $M_n$ is an $n$-multiplier and $f_1, \dots , f_n$ are functions on $\R$, we define the $n$-linear functional
$\Lambda_n(M_n; f_1, \dots , f_n)$ by
$$\Lambda_n(M_n; f_1, \dots , f_n)= \int_{\Gamma_n}M_n(\xi_1,\dots,\xi_n)\prod_{j=1}^n\widehat{f_j}(\xi_j).$$

We will often apply $\Lambda_n$ to $n$ copies of the same function $u$ in which case the dependence upon $u$ may
be suppressed in the notation: $\Lambda_n(M_n; u, \dots , u)$ may simply be written as $\Lambda_n(M_n)$.

If $M_n$ is symmetric, so does the  $n$-linear functional $\Lambda_n(M_n)$.

As an example, suppose that $u$ is an $\R$-valued function. By Plancherel, we can rewrite the energy (\ref{EC})
in terms of $n$-linear functionals as
\begin{eqnarray*}
E(u(t))=-\dfrac{1}{2}\Lambda_2(\xi_1\xi_2)-\dfrac{\mu}{k+2}\Lambda_{k+2}(1).
\end{eqnarray*}

The time derivative of a symmetric $n$-linear functional can be calculated explicitly if we assume that the function
$u$ satisfies a particular PDE. The following statement may be directly verified by using the generalized KdV
equation (\ref{gkdv}).

\begin{proposition}
Suppose u satisfies the  generalized KdV equation (\ref{gkdv}) and that $M_n$ is a symmetric $J$-multiplier. Then
\begin{equation}\label{LD}
\dfrac{d}{dt}\Lambda_J(M_J)\!\!=\!\!\Lambda_n(M_J\alpha_J)-iJ\mu \Lambda_{J+k}(M_n(\xi_1,\dots,\xi_{J-1},\xi_J+\cdots+\xi_{J+k})
(\xi_J+\cdots+\xi_{J+k})),
\end{equation}
where $\alpha_n\equiv i(\xi_1^3+\cdots+\xi_J^3)$.
\end{proposition}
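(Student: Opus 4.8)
The plan is to verify the formula \eqref{LD} by direct computation, differentiating the $J$-linear functional $\Lambda_J(M_J)$ under the assumption that each copy of $u$ solves \eqref{gkdv}, and then using the symmetry of $M_J$ to simplify. First I would write out
\begin{equation*}
\frac{d}{dt}\Lambda_J(M_J)=\int_{\Gamma_J}M_J(\xi_1,\dots,\xi_J)\,\frac{d}{dt}\prod_{j=1}^J\widehat{u}(\xi_j,t),
\end{equation*}
and expand the time derivative of the product by the Leibniz rule into a sum of $J$ terms, in each of which exactly one factor $\widehat{u}(\xi_j,t)$ is replaced by $\partial_t\widehat{u}(\xi_j,t)$. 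By taking the spatial Fourier transform of the equation \eqref{gkdv}, I would substitute
\begin{equation*}
\partial_t\widehat{u}(\xi_j,t)=i\xi_j^3\,\widehat{u}(\xi_j,t)-i\mu\xi_j\,\widehat{u^{k+1}}(\xi_j,t),
\end{equation*}
so that each of the $J$ terms splits into a linear piece (carrying the factor $i\xi_j^3$) and a nonlinear piece (carrying the factor $-i\mu\xi_j$ and the transform of $u^{k+1}$).

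The second step is to collect the $J$ linear pieces. Summing the factors $i\xi_j^3$ over $j=1,\dots,J$ reproduces precisely $\alpha_J=i(\xi_1^3+\cdots+\xi_J^3)$ multiplying the integrand, which by definition is exactly $\Lambda_J(M_J\alpha_J)$; this accounts for the first term on the right-hand side of \eqref{LD}. The third step handles the nonlinear pieces. Here $\widehat{u^{k+1}}(\xi_j,t)$ is itself a convolution, so it can be written as an integral over $k+1$ new frequency variables whose sum equals $\xi_j$. Relabelling these as $\xi_J,\dots,\xi_{J+k}$ and using the constraint $\xi_j=\xi_J+\cdots+\xi_{J+k}$ converts a typical nonlinear term into a $(J+k)$-linear functional whose multiplier is $M_J$ evaluated with its last slot replaced by $\xi_J+\cdots+\xi_{J+k}$, times the factor $(\xi_J+\cdots+\xi_{J+k})$ coming from $\xi_j$. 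Because $M_J$ is symmetric, all $J$ nonlinear terms coincide after relabelling, which produces the factor $J$ and collapses the sum into the single term $-iJ\mu\,\Lambda_{J+k}\bigl(M_J(\xi_1,\dots,\xi_{J-1},\xi_J+\cdots+\xi_{J+k})(\xi_J+\cdots+\xi_{J+k})\bigr)$, matching \eqref{LD}.

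The main obstacle I anticipate is purely bookkeeping: correctly tracking the frequency variables when a single factor $\widehat{u}$ is expanded into the $(k+1)$-fold convolution, and verifying that the new $(J+k)$ variables indeed sum to zero on the enlarged hyperplane $\Gamma_{J+k}$ so that the functional $\Lambda_{J+k}$ is well defined. One must check that when the $j$th factor is replaced, the remaining $J-1$ frequencies together with the $k+1$ convolution variables do satisfy the constraint $\xi_1+\cdots+\xi_{J-1}+\xi_J+\cdots+\xi_{J+k}=0$, which follows from the original constraint $\sum_{i=1}^J\xi_i=0$ combined with $\xi_j=\xi_J+\cdots+\xi_{J+k}$. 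The symmetry reduction in the last step deserves care as well, since it is what justifies replacing a sum of $J$ a priori distinct functionals by $J$ times a single one; this is precisely where the hypothesis that $M_J$ is symmetric is used, and it should be stated explicitly.
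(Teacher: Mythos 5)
Your computation is correct and is precisely the direct verification the paper invokes (the paper offers no written proof, stating only that the identity ``may be directly verified by using the generalized KdV equation''); your Fourier-side substitution $\partial_t\widehat{u}(\xi)=i\xi^3\widehat{u}(\xi)-i\mu\xi\,\widehat{u^{k+1}}(\xi)$, the collection of the linear pieces into $\alpha_J$, and the symmetrization of the $J$ nonlinear convolution terms into the single $\Lambda_{J+k}$ term all match the standard argument (the $n$'s appearing in the paper's statement are typos for $J$, as you implicitly and correctly assumed). No gaps.
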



\section{Local well-posedness}\label{localwp}

Our aim in this section is to establish Theorem \ref{local}. We use the contraction
mapping principle. Define the metric spaces
$$
X_T=\{u\in C([0,T];H^s(\mathbb{R})):\;\tres u\tres_s<\infty\}
$$
and
$$
X_T^a=\{u\in X_T:\;\tres u\tres_s\leq a\},
$$
where
\begin{equation}\label{norms}
\begin{split}
\tres
u\tres_s=&\|u\|_{L^\infty_TH^s_x}+\|u\|_{L^5_xL^{10}_T}+\|D^s_xu\|_{L^5_xL^{10}_T}\\
&+\|\partial_xu\|_{L^\infty_xL^{2}_T}+\|D^{s}_x\partial_xu\|_{L^\infty_xL^{2}_T}+\|D^{\gamma_k}_tD^{\alpha_k}_xD^{\beta_k}_tu\|_{L^{p_k}_xL^{q_k}_T}.
\end{split}
\end{equation}
The parameters $T$ and $a$ will be appropriately chosen later. On
$X_T$ consider the integral operator
\begin{equation}\label{Phi}
\Phi(u)(t):=U(t)u_0-\mu\int_0^tU(t-t')\partial_x(u^{k+1})(t')dt'.
\end{equation}

We only give the details to estimate the $\|\cdot\|_{L^{\infty}_TH^s}$--norm.
From group properties and Lemma \ref{lemma1} (ii),
\begin{equation*}
\begin{split}
\|\Phi(u)\|_{L^2_x}&\leq \|u_0\|_{L^2_x}+\|\partial_x\int_0^tU(t-t')u^{k+1}(t')dt'\|_{L^2_x} \\
&\lesssim|u_0\|_{L^2_x}+\|u^{k+1}\|_{L^1_xL^2_T}\\
&\lesssim\|u_0\|_{L^2_x}+\|u\|_{L^{5k/4}_xL^{5k/2}_T}^k\|u\|_{L^5_xL^{10}_T}.
\end{split}
\end{equation*}
Now, from Lemma \ref{lemma3} and Sobolev's inequality it follows that
\begin{equation}\label{a5.1}
\begin{split}
\|\Phi(u)\|_{L^2_x}&\leq \|u_0\|_{L^2_x}+\|D^{\alpha_k}_xD^{\beta_k}_tu\|_{L^{p_k}_xL^{q_k}_T}^k\|u\|_{L^5_xL^{10}_T}\\
&\lesssim \|u_0\|_{L^2_x}+T^{k\gamma_k}\|D^{\gamma_k}_tD^{\alpha_k}_xD^{\beta_k}_tu\|_{L^{p_k}_xL^{q_k}_T}^k\|u\|_{L^5_xL^{10}_T}\\
&\leq \|u_0\|_{L^2_x}+T^{k\gamma_k}\tres u\tres_s^{k+1}.
\end{split}
\end{equation}

Next, we estimate the $\dot{H}^s$-norm. Group properties and an application of Lemma \ref{lemma1} yield
\begin{equation*}
\begin{split}
\|D^s_x\Phi(u)\|_{L^2_x}&\leq \|D^s_xu_0\|_{L^2_x}+\|\partial_x\int_0^tU(t-t')D^s_x(u^{k+1})dt'\|_{L^2_x} \\
&\lesssim\|u_0\|_{L^2_x}+\|D^s_x(u^{k+1})\|_{L^1_xL^2_T}.
\end{split}
\end{equation*}
By applying Lemma \ref{lemma6} and then Lemma \ref{lemma3}, we deduce
\begin{equation}\label{a6a}
\begin{split}
\|D^s_x(u^{k+1})&\|_{L^1_xL^2_T}\lesssim
\|u^k\|_{L^{5/4}_xL^{5/2}_T}\|D^s_xu\|_{L^5_xL^{10}_T}
+\|uD^s_x(u^k)\|_{L^1_xL^2_T}\\
&\lesssim\|u\|_{L^{5k/4}_xL^{5k/2}_T}^k\|D^s_x u\|_{L^5_xL^{10}_T}
+\|u\|_{L^{5k/4}_xL^{5k/2}_T}\|D^s_x(u^k)\|_{L^{p_0}_xL^{q_0}_T}\\
&\lesssim\|D^{\alpha_k}_xD^{\beta_k}_tu\|_{L^{p_k}_xL^{q_k}_T}^k\|D^s_xu\|_{L^5_xL^{10}_T}\\
&\quad+\|D^{\alpha_k}_xD^{\beta_k}_tu\|_{L^{p_k}_xL^{q_k}_T}\|D^s_xu\|_{L^5_xL^{10}_T}\|u^{k-1}\|_{L^{p_1}_xL^{q_1}_T},
\end{split}
\end{equation}
where
$$
\frac{1}{p_1}\!=\!\frac{1}{p_0}-\frac{1}{5}\!=\!1-\frac{4}{5k}-\frac{1}{5}\!=\!\frac{4(k-1)}{5k}
\;\;\;
\text{and}
\;\;\;
\frac{1}{q_1}\!=\!\frac{1}{q_0}-\frac{1}{10}\!=\!\frac{1}{2}-\frac{2}{5k}-\frac{1}{10}\!=\!\frac{4(k-1)}{10k}.
$$

On the other hand, from Lemma \ref{lemma3},
\begin{equation}\label{a6}
\|u^{k-1}\|_{L^{p_1}_xL^{q_1}_T}\lesssim\|u\|_{L^{5k/4}_xL^{5k/2}_T}^{k-1}
\lesssim\|D^{\alpha_k}_xD^{\beta_k}_tu\|_{L^{p_k}_xL^{q_k}_T}^{k-1}.
\end{equation}
Sobolev's inequality and \eqref{a6} then imply
\begin{equation*}
\begin{split}
\|D^s_x(u^{k+1})\|_{L^1_xL^2_T}
&\lesssim\|D^{\alpha_k}_xD^{\beta_k}_tu\|_{L^{p_k}_xL^{q_k}_T}^k\|D^s_xu\|_{L^5_xL^{10}_T}\\
&\lesssim T^{k\gamma_k}\|D^{\gamma_k}_tD^{\alpha_k}_xD^{\beta_k}_tu\|_{L^{p_k}_xL^{q_k}_T}^k
\|D^s_xu\|_{L^5_xL^{10}_T}.
\end{split}
\end{equation*}
Therefore,
\begin{equation}\label{a7}
\|D^s_x\Phi(u)\|_{L^2_x}\leq \|D^s_xu_0\|_{L^2_x}+T^{k\gamma_k}\tres
u\tres_s^{k+1}.
\end{equation}

To estimate the remainder norms in \eqref{norms} we will make use of Lemmas  \ref{lemma1}, \ref{lemma2},
\ref{lemma5} and \ref{lemma3} to lead to

$$
\tres \Phi(u)\tres_s \leq c\|u_0\|_{H^s}+cT^{k\gamma_k}\tres
u\tres_s^{k+1}.
$$
Choose $a=2c\|u_0\|_{H^s}$ and $T>0$ such that
$$
ca^kT^{k\gamma_k}<\frac{1}{20}.
$$
This implies that $\Phi:X_T^a\rightarrow X_T^a$ is well defined. To
finish the proof we need to prove that $\Phi$ is also a contraction;
but, the argument is analogue to the previous one. The rest of the
proof follows in a standard way.

\begin{remark}
From the proof of Theorem \ref{local} it follows that
$$
T\sim\|u_0\|_{H^s}^{-1/\gamma_k}=\|u_0\|_{H^s}^{-3/(s-s_k)}.
$$
This is in agreement with the case $k=4$, where $T\sim \|u_0\|_{H^s}^{-3/s}$ (see \cite{FLP1}).
\end{remark}


\section{Global well-posedness in $H^1$}

In this section, we intend to show Theorem \ref{global3}. We begin by recalling the classical result obtained by Nagy \cite{Na}
(see also Weinstein \cite{W83}), regarding the best constant of the Gagliardo-Nirenberg inequality \eqref{ap2}.

\begin{theorem}\label{best}
Let $k>0$,  then the Gagliardo-Nirenberg inequality
\begin{equation}\label{g-n}
\|u\|_{L^{k+2}(\R)}^{k+2}\le K_{\rm opt}^{k+2}\,\|\nabla u\|_{L^2(\R)}^{\frac k 2}\|u\|_{L^2(\R)}^{2+\frac k 2},
\end{equation}
holds, and the sharp constant $K_{\rm opt}>0$ is
explicitly given by
\begin{equation}\label{opt1}
K_{\rm opt}^{k+2}=\frac{k+2}{2\|\psi\|_{L^2}^k},
\end{equation}
where $\psi$ is the unique non-negative, radially-symmetric, decreasing solution of the equation
\begin{equation}\label{ground}
\frac{k}{4}\Delta \psi-\left(1+\frac k 4 \right)\psi+\psi^{k+1}=0.
\end{equation}
\end{theorem}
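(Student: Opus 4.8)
The plan is to identify the sharp constant via the Weinstein variational quotient and then evaluate it on the extremizer by means of Pohozaev identities. First I would introduce the functional
\[
W(u)=\frac{\|\partial_x u\|_{L^2}^{k/2}\|u\|_{L^2}^{2+k/2}}{\|u\|_{L^{k+2}}^{k+2}},
\]
so that, by definition of the best constant in \eqref{g-n}, one has $K_{\rm opt}^{k+2}=1/\inf_{u\neq 0}W(u)$. The functional $W$ is invariant under the two-parameter family of transformations $u(x)\mapsto \lambda u(\mu x)$, which encodes the scaling symmetry of \eqref{g-n}; this invariance is precisely what will later allow me to normalize the extremizer and fix the coefficients in its Euler--Lagrange equation.

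Next I would show that the infimum is attained by a non-negative, even, decreasing function. Replacing $u$ by its symmetric decreasing rearrangement $u^\ast$ leaves $\|u\|_{L^2}$ and $\|u\|_{L^{k+2}}$ unchanged while not increasing $\|\partial_x u\|_{L^2}$ (the P\'olya--Szeg\H o inequality), so $W(u^\ast)\le W(u)$ and one may minimize over such functions. Taking a minimizing sequence and using the dilation invariance to normalize $\|u_n\|_{L^2}=\|\partial_x u_n\|_{L^2}=1$, I would extract a subsequence converging weakly in $H^1(\R)$; the compactness needed to transfer the $L^{k+2}$ norm to the limit follows either from the compact embedding available for symmetric decreasing functions or from the concentration--compactness principle, with vanishing and dichotomy excluded by the normalization. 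This produces a minimizer $\psi\ge 0$.

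The minimizer satisfies the associated Euler--Lagrange equation which, after the two free scaling parameters are used to fix the coefficients, takes the canonical form \eqref{ground}; strict positivity and radial monotonicity follow from the maximum principle and the rearrangement structure, while uniqueness of the positive solution reduces in one dimension to a phase-plane analysis of the ordinary differential equation (indeed \eqref{ground} admits an explicit solitary-wave solution, which settles uniqueness at once). To pin down the constant, I would multiply \eqref{ground} by $\psi$ and integrate, and separately multiply by $x\psi'$ and integrate by parts. Writing $A=\|\partial_x\psi\|_{L^2}^2$, $B=\|\psi\|_{L^2}^2$, and $C=\|\psi\|_{L^{k+2}}^{k+2}$, the first relation gives $C=\tfrac{k}{4}A+\tfrac{k+4}{4}B$ and the Pohozaev relation gives a second independent identity; together they force $A=B$ and $C=\tfrac{k+2}{2}A$. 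Substituting into $W(\psi)$ yields $W(\psi)=\tfrac{2}{k+2}\|\psi\|_{L^2}^{k}$, whence $K_{\rm opt}^{k+2}=\tfrac{k+2}{2\|\psi\|_{L^2}^{k}}$, which is exactly \eqref{opt1}.

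The main obstacle is the existence/compactness step, namely guaranteeing that no mass is lost along the minimizing sequence; the rearrangement reduction together with the dilation normalization are the decisive tools there. By contrast, the Euler--Lagrange derivation and the Pohozaev computation that produce the explicit value of $K_{\rm opt}$ are entirely routine once the extremizer is in hand.
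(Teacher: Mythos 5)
Your proposal is correct and takes essentially the same route as the paper, whose ``proof'' is simply a citation to Nagy and Weinstein: you reconstruct Weinstein's variational argument (minimize the scale-invariant quotient via rearrangement and compactness, pass to the Euler--Lagrange equation normalized to \eqref{ground}, then evaluate the quotient on $\psi$ using the two integral identities). Your Pohozaev computation checks out: the two relations indeed give $\|\partial_x\psi\|_{L^2}^2=\|\psi\|_{L^2}^2$ and $\|\psi\|_{L^{k+2}}^{k+2}=\tfrac{k+2}{2}\|\psi\|_{L^2}^2$, which yields \eqref{opt1}.
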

\begin{proof}
See \cite{Na} and \cite{W83}.
\end{proof}

Before proceeding to our main result, we will establish a relation between the solution $\psi$ of  (\ref{ground})
and the unique non-negative, radially-symmetric, decreasing solution,  $Q$, of the equation
\begin{equation}\label{ground3}
\Delta Q-Q+Q^{k+1}=0.
\end{equation}

\begin{remark}\label{cgkdv} Recall that for the critical generalized KdV equation, that is, equation (\ref{gkdv})
with $k=4$, $\mu=1$, we have global solutions if $\|u_0\|_{L^2}<\|Q\|_{L^2}$, and $u_0\in H^s(\R)$, $s>6/13$, where
$Q$ is the solution of (\ref{ground3}) with $k=4$ (see  \cite{miao}, \cite{LG6}, \cite{FLP1}, and \cite{W83}).
\end{remark}

First, we note that if $\psi$ is a solution of (\ref{ground}) then $\lambda \psi(\omega x)$, where
$\lambda=\left(\frac{4}{k+4}\right)^{1/k}$ and $\omega=\left(\frac{k}{k+4}\right)^{1/2}$, is a solution of
(\ref{ground3}). Therefore, by uniqueness, we have
$$Q(x)=\lambda \psi(\omega x).$$
A simple calculation shows that
$$\|Q\|_{L^2}^2=\dfrac{\lambda^2}{\omega}\|\psi\|_{L^2}^2.$$
Combining this last relation with (\ref{opt1}) yields
\begin{eqnarray}\label{Q}
K_{\rm opt}^{k+2}=\dfrac{2(k+2)(k+4)^{\frac{k-4}{4}}}{(k)^{\frac{k}{4}}\| Q\|_{L^{2}}^{k}}.
\end{eqnarray}
Moreover, by multiplying (\ref{ground3}) by $Q$, integrating, and applying integration by parts, we obtain
$$\|Q\|_{L^{k+2}}^{k+2}=\|Q\|_{L^{2}}^{2}+\|\partial_x Q\|_{L^{2}}^{2}.$$

On the other hand, by multiplying (\ref{ground3}) by $x \partial_x Q$, integrating, and applying integration by parts,
we obtain the Pohozhaev-type identity
$$\dfrac{2}{k+2}\|Q\|_{L^{k+2}}^{k+2}=\|Q\|_{L^{2}}^{2}-\|\partial_x Q\|_{L^{2}}^{2}.$$
Combining the last two relations, we obtain
\begin{equation}\label{relQ}
\dfrac{(k+4)}{2(k+2)}\|Q\|_{L^{k+2}}^{k+2}=\|Q\|_{L^{2}}^{2} \textrm{  and  } \|Q\|_{L^{2}}^{2}=
\dfrac{k+4}{k}\|\partial_x Q\|_{L^{2}}^{2}.
\end{equation}

Now we are ready to prove the main global result of this section.

\begin{proof}[Proof of Theorem \ref{global3}] We proceed as follows: write the $\dot{H}^1$-norm of $u(t)$
using the quantities $M(u(t))$ and $E(u(t))$. Then we use the sharp Gagliardo-Nirenberg inequality \eqref{g-n} to yield
\begin{equation}\label{ap10}
\begin{split}
\|\partial_x u(t)\|_{L^2}^2&=2E(u_0)+\frac{2}{k+2} \|u(t)\|_{L^{k+2}}^{k+2}\\
&\le 2E(u_0)+\frac{2}{k+2}K_{\rm opt}^{k+2}\,
\|u_0\|_{L^2}^{\frac{k+4}{2}}\|\partial_x u(t)\|_{L^2}^{\frac{k}{2}}.
\end{split}
\end{equation}
Let $X(t)=\|\partial_x u(t)\|_{L^2}^2$, $A=2E(u_0)$,
and $B=\frac{2}{k+2}K_{\rm opt}^{k+2}\|u_0\|_{L^2}^{\frac{k+4}{2}}$, then we can write \eqref{ap10} as
\begin{equation}\label{ap12}
X(t)-B\,X(t)^{k/4}\le A, \text{\hskip2pt for}\;\;t\in (0,T),
\end{equation}
where $T$ is given by Theorem \ref{local}.

Now let $f(x)=x-B\,x^{k/4}$, for $x\ge 0$. The function $f$ has a local maximum at $x_0=\Big(\dfrac{4}{kB}\Big)^{4/(k-4)}$
with maximum value $f(x_0)=\dfrac{k-4}{k}\Big(\dfrac{4}{kB}\Big)^{4/(k-4)}.$
If we require that
\begin{equation}\label{ap13}
2E(u_0) < f(x_0)\,\,\,\, \mbox{and}\, \,\,\, X(0) < x_0,
\end{equation}
the continuity of $X(t)$ implies that $X(t) < x_0$ for any $t$ as long as the solution exists.

Using relations (\ref{relQ}), we have
$$E(Q)=\dfrac{k-4}{2(k+4)}\|Q\|_{L^2}^2.$$
Therefore, a simple calculation shows that conditions (\ref{ap13}) are exactly the inequalities (\ref{GR1}) and (\ref{GR2}).
Moreover the inequality $X(t) < x_0$ reduces to (\ref{GR3}). The proof of Theorem \ref{global3} is thus completed.
\end{proof}



\section{Global well-posedness in $H^s$, $s<1$: $\mu=-1$ and $k$ even}\label{impglobal}

In this section, we prove Theorem \ref{T1}. As we mentioned in the introduction,
we follow the ``almost conservation law" scheme introduce in  \cite{CKSTT4}--\cite{CKSTT2}.

\subsection{Modified energy functional}
To start with, we introduced a substitute notion of  ``energy" that could be defined for less regular functions and that has
very low increment in time. Given $s<1$ and a parameter $N\gg 1$, define a multiplier operator $I_N:H^s\rightarrow H^1$
such that
\begin{equation*}
\widehat{I_Nf(\xi)}\equiv m_N(\xi)\widehat{f}(\xi),
\end{equation*}
where the multiplier $m_N(\xi)$ is the nondecreasing in $|\xi|$, smooth and radially symmetric function defined as
\begin{eqnarray*}
m_N(\xi)=\left\{
\begin{array}{ll }
1&, \textrm{ if } |\xi|\leq N,\\
\left(\dfrac{N}{|\xi|}\right)^{1-s}&, \textrm{ if } |\xi|\geq 2N.
\end{array} \right.
\end{eqnarray*}

To simplify the notation, we omit the dependence of $N$ in $I_N$ and denote it only by $I$. Note that the operator $I$ is
smoothing of order $1-s$. Indeed we have
\begin{equation}\label{smo}
\|u\|_{X_{s_0,b_0}}\leq c\|Iu\|_{X_{s_{0}+1-s,b_0}}\leq cN^{1-s}\|u\|_{X_{s_0,b_0}},
\end{equation}
for any $s_0,b_0\in \R$.

Our substitute energy will be defined by $E^1(u)=E(Iu)$. Obviously this energy makes sense even if $u$ is only in
$H^s(\R)$. Thus,  in terms of $n$-linear functionals we have
\begin{equation}\label{ME1}
E^1(u)=-\dfrac{1}{2}\Lambda_2(m_1\xi_1m_2\xi_2)-\dfrac{\mu}{k+2}\Lambda_{k+2}(m_1\dots m_{k+2}),
\end{equation}
where $m_j=m(\xi_j)$.

We can think about $E^1(u)$ as the first generation of a family of modified energies. One can also define the second energy
\begin{equation}\label{ME2}
E^2(u)=-\dfrac{1}{2}\Lambda_2(m_1\xi_1m_2\xi_2)-\dfrac{\mu}{k+2}\Lambda_{k+2}(M_{k+2}(\xi_1,\dots, \xi_{k+2})),
\end{equation}
where $M_{k+2}$ is an arbitrarily symmetric $(k+2)$-multiplier.

Thus, using the derivation law (\ref{LD}), we obtain
\begin{equation*}
\begin{split}
\frac{d}{dt}E^2(u)
&=-\dfrac{1}{2}\Lambda_2(m_1\xi_1m_2\xi_2\alpha_2)\\
&\quad+\dfrac{\mu i}{k+2}\Lambda_{k+2}((m_1^2\xi_1^3+\dots+m_{k+2}^2\xi_{k+2}^3)-M_{k+2}(\xi_1^3+\dots+\xi_{k+2}^3))\\
&\quad+\mu^2i\Lambda_{2k+2}(M_{k+2}(\xi_1,\dots,\xi_{k+1},\xi_{k+2}+\cdots+\xi_{2k+2})(\xi_{k+2}+\cdots+\xi_{2k+2})),
\end{split}
\end{equation*}
where we have used the identity $\xi_1+\cdots+\xi_6=0$ and symmetrizing.

Note that picking
\begin{equation*}
M_{k+2}(\xi_1,\dots,\xi_{k+2})=\dfrac{m_1^2\xi_1^3+\dots+m_{k+2}^2\xi_{k+2}^3}{\xi_1^3+\dots+\xi_{k+2}^3}
\end{equation*}
we can force $\Lambda_{k+2}$ to be zero. Unfortunately the multiplier $M_{k+2}$ is not well defined in the set $\Gamma_{k+2}$.
In fact, given $N\gg 1$, we can find numbers $\xi_1,\dots,\xi_{k+2}$ such that the denominator of $M_{k+2}$ is zero and the numerator
is different from zero. This is the content of the following proposition.

\begin{proposition}\label{P1}
There exist numbers $\xi_1,\dots,\xi_{k+2}$ such that
\begin{itemize}
\item[(i)]
$\left\{
\begin{array}{l }
\xi_1+\cdots+\xi_{k+2}= 0;\\
\xi_1^3+\cdots+\xi_{k+2}^3=0.
\end{array} \right.$
\item[(ii)] $m_1^2\xi_1^3+\cdots+m_{k+2}^2\xi_{k+2}^3\neq 0.$
\end{itemize}
\end{proposition}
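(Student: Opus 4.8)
The plan is to produce an explicit configuration, and to first record the reduction that makes the obstruction transparent. On $\Gamma_{k+2}$ the second identity in (i) gives $\sum_j\xi_j^3=0$, so
$$\sum_{j=1}^{k+2}m_j^2\xi_j^3=\sum_{j=1}^{k+2}(m_j^2-1)\xi_j^3,$$
and since $m_j=1$ whenever $|\xi_j|\le N$, only the large frequencies contribute. In particular, any configuration assembled from cancelling pairs $\{\eta,-\eta\}$ makes the right-hand side vanish; since a short computation shows that the only real solutions of (i) with four nonzero frequencies consist of two such pairs, the construction must use at least five nonzero frequencies and be genuinely asymmetric.

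To obtain a clean, $N$-independent problem I would place all nonzero frequencies in the outer region $|\xi_j|\ge 2N$, where $m_j^2=(N/|\xi_j|)^{2(1-s)}$. There
$$\sum_j m_j^2\xi_j^3=N^{2(1-s)}\sum_j \sgn(\xi_j)\,|\xi_j|^{1+2s},$$
so, writing $\xi_j=2N\zeta_j$ and using homogeneity, the whole matter reduces to finding reals $\zeta_j$ with $|\zeta_j|\ge1$, $\sum_j\zeta_j=0$, $\sum_j\zeta_j^3=0$ and $G(1+2s)\ne0$, where $G(r):=\sum_j\sgn(\zeta_j)|\zeta_j|^{r}$. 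I would take the five values $\zeta\in\{\,3+\sqrt{2/3},\ 3-\sqrt{2/3},\ -4,\ -1,\ -1\,\}$, together with the $k-3\ge2$ remaining frequencies set to $0$ (these lie in the region $m=1$ and contribute nothing). A direct check gives $\sum\zeta_j=6-6=0$ and $\sum\zeta_j^3=66-66=0$, and every nonzero $|\zeta_j|\ge1$, so after scaling by $2N$ (or any larger factor) condition (i) holds and each nonzero frequency satisfies $|\xi_j|\ge 2N$.

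It remains to verify that $G(1+2s)\ne0$ for \emph{every} $s\in(0,1)$, i.e. for every exponent $r=1+2s\in(1,3)$; this uniformity in $s$ is the delicate point and the main obstacle. Combining the two terms with $\zeta=-1$, one has $G(r)=(3+\sqrt{2/3})^{r}+(3-\sqrt{2/3})^{r}-4^{r}-2$, an exponential sum $\sum_i c_i e^{r\log b_i}$ in the distinct bases $b_i\in\{4,\,3+\sqrt{2/3},\,3-\sqrt{2/3},\,1\}$ whose coefficients, ordered by decreasing base, have signs $-,+,+,-$, that is, exactly two sign changes. By the Descartes-type (Rolle) rule for exponential sums, $G$ has at most two real zeros; since the constraints already force $G(1)=0$ and $G(3)=0$, these are the only zeros, whence $G(r)\ne0$ for all $r\in(1,3)$ and in particular for $r=1+2s$. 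This proves (ii). The essential difficulty, as noted, is that the natural solutions of (i) are symmetric and annihilate the multiplier sum, so one must simultaneously break the symmetry and then certify non-vanishing of the resulting $s$-dependent exponential sum uniformly in $s$, for which the sign-count argument is the clean device.
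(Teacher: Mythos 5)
Your construction is correct, and it is genuinely different from what the paper does: the paper offers no argument at all for Proposition \ref{P1}, deferring entirely to \cite[Proposition 3.1 and Remark 3.2]{LG6}, i.e.\ to the explicit example built there for the critical case $k=4$ together with a remark on how to adapt it. Your proof is self-contained and works uniformly in $k\ge 5$ and in $s$. The reduction is sound: with all nonzero frequencies placed in the region $|\xi|\ge 2N$ one has $m_j^2\xi_j^3=N^{2(1-s)}\sgn(\xi_j)|\xi_j|^{1+2s}$, the zero frequencies contribute nothing, and the problem becomes scale-invariant. Your five values satisfy $\sum\zeta_j=6-6=0$ and $\sum\zeta_j^3=(54+18\cdot\tfrac23)-66=0$, every nonzero $|\zeta_j|\ge 1$, and the $k-3\ge 2$ remaining slots can indeed be filled with zeros since $k+2\ge 7$. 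The uniform non-vanishing step is the real content, and your use of the Laguerre--Descartes bound for exponential sums is clean: $G(r)=(3+\sqrt{2/3})^{r}+(3-\sqrt{2/3})^{r}-4^{r}-2\cdot 1^{r}$ has four distinct bases with sign pattern $-,+,+,-$, hence at most two real zeros counted with multiplicity, and these are already spent at $r=1$ and $r=3$ (forced by the two constraints in (i)); so $G(1+2s)\neq 0$ for all $s\in(0,1)$, which more than covers the range $s>1/2$ actually needed in Section \ref{impglobal}. Your preliminary observation that any four-nonzero-frequency solution of (i) splits into two cancelling pairs (via $a^3+b^3+c^3+d^3=-3(a+b)(b+c)(c+a)$ on $a+b+c+d=0$) correctly explains why five nonzero frequencies are necessary, since $m$ is even. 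What your route buys over the paper's citation is a single explicit configuration valid for every $k$ and every relevant $s$ with a two-line certificate of non-vanishing; what the citation buys is brevity and consistency with the $k=4$ antecedent. Two cosmetic slips worth fixing: in the motivating sentence you write ``the right-hand side'' where you mean the multiplier sum itself, and the displayed definition of \textbf{Term} later in the paper uses $\xi_6$ where $\xi_{k+2}$ is meant --- neither affects your argument.
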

\begin{proof}
See \cite[Proposition 3.1 and Remark 3.2]{LG6} .
\end{proof}

Therefore, throughout this section we work only with the first modified energy (\ref{ME1}).
Again, using the derivation law (\ref{LD}) and symmetrizing, we have
\begin{equation*}
\begin{split}
\frac{d}{dt}E^1(u)&=-\dfrac{1}{2}\Lambda_2(m_1\xi_1m_2\xi_2(\xi_1^3+\xi_2^3))\\
&+\dfrac{\mu i}{k+2}\Lambda_{k+2}((m_1^2\xi_1^3+\dots+m_{k+2}^2\xi_{k+2}^3)-m_1\dots m_{k+2}(\xi_1^3+\dots+\xi_{k+2}^3))\\
&+\mu^2i\Lambda_{2k+2}(m_1\dots m_{k+1}m(\xi_{k+2}+\cdots+\xi_{2k+2})(\xi_{k+2}+\cdots+\xi_{2k+2})).
\end{split}
\end{equation*}

\begin{remark}Observe that if $m=1$, the $\Lambda_{k+2}$ term vanish trivially. On the other hand, the terms
$\Lambda_2$ and $\Lambda_{2k+2}$ are also zero, since we have the restriction $\xi_1+\xi_2=0$ in the first and
symmetrization in the later. This reproduces the Fourier proof of the energy conservation (\ref{EC}).
\end{remark}

As one particular instance of the above computations and the Fundamental Theorem of Calculus, we have\\
\begin{eqnarray}\label{EMC1}
E^1(u)(t)-E^1(u)(0)=\int_0^t\frac{d}{dt}E^1(u)(t')dt'=
\end{eqnarray}
\vspace*{-0.25in}
\begin{equation*}
\begin{split}
=&\dfrac{\mu i}{k+2}\int_0^t\Lambda_{k+2}((m_1^2\xi_1^3+\dots+m_{k+2}^2\xi_{k+2}^3)-m_1\dots m_{k+2}(\xi_1^3
+\!\dots\!+\xi_{k+2}^3)) (t')dt'\\
&+\mu^2i\int_0^t\Lambda_{2k+2}(m_1\dots m_{k+1}m(\xi_{k+2}+\cdots+\xi_{2k+2})(\xi_{k+2}+\cdots+\xi_{2k+2}))(t')dt'.
\end{split}
\end{equation*}

Most of our arguments here consist in  showing that the quantity $E^1(u)$ is \textit{almost} conserved in time.

\subsection{Almost conservation law}
This subsection presents a detailed analysis of the expression (\ref{EMC1}). The analysis identifies some
cancelations in the pointwise upper bound of some multipliers depending on the relative size of the frequencies
involved. Our aim is to prove the following almost conservation property.

\begin{proposition}\label{p4.1}
Let $s>1/2$, $N\gg 1$ and $u\in H^s(\R)$ be a solution of \eqref{gkdv} on $[T, T+\delta]$ such that $Iu\in H^1(\R)$.
Then the following estimate holds
\begin{equation}\label{CC}
\left|E^1(u)(T+\delta)-E^1(u)(T)\right|\lesssim
N^{-2+}\left( \left\|Iu\right\|_{X^{\delta}_{1,\frac{1}{2}+}}^{k+2}
+\left\|Iu\right\|_{X^{\delta}_{1,\frac{1}{2}+}}^{2k+2}\right).
\end{equation}
\end{proposition}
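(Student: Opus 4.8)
The plan is to bound the two integrals appearing in \eqref{EMC1}. Since $\mu=-1$, the quadratic term $\Lambda_2$ has already vanished and we are left with the $\Lambda_{k+2}$ term (the ``multiplier symmetrization'' error) and the $\Lambda_{2k+2}$ term (the genuinely higher-order contribution coming from the $\partial_x(u^{k+1})$ nonlinearity). The overall strategy, following the $I$-method of \cite{CKSTT4}, is: (i) derive a pointwise upper bound on each $(k+2)$- and $(2k+2)$-multiplier that extracts a power $N^{-2+}$, exploiting the cancellation built into $m_1^2\xi_1^3+\cdots+m_{k+2}^2\xi_{k+2}^3-m_1\cdots m_{k+2}(\xi_1^3+\cdots+\xi_{k+2}^3)$ on $\Gamma_{k+2}$; and (ii) estimate the resulting multilinear space-time integrals in $X_{s,b}$-norms using the Strichartz-type bounds \eqref{L8}, \eqref{L6}, \eqref{LP}, \eqref{LI1}, \eqref{LI2}, and the refined bilinear estimate \eqref{GRU}.

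First I would treat the $\Lambda_{k+2}$ term. On $\Gamma_{k+2}$ one has $\xi_1+\cdots+\xi_{k+2}=0$, so the numerator $m_1^2\xi_1^3+\cdots+m_{k+2}^2\xi_{k+2}^3-m_1\cdots m_{k+2}(\xi_1^3+\cdots+\xi_{k+2}^3)$ vanishes when all $m_j=1$, i.e. when all frequencies are below $N$. Thus the symbol is nonzero only when the top frequency exceeds $N$. Ordering the frequencies $|\xi_1^*|\ge|\xi_2^*|\ge\cdots$, the key pointwise claim to establish is
\begin{equation*}
\frac{\left|m_1^2\xi_1^3+\cdots+m_{k+2}^2\xi_{k+2}^3-m_1\cdots m_{k+2}(\xi_1^3+\cdots+\xi_{k+2}^3)\right|}
{m(\xi_1^*)\,|\xi_1^*|\,m(\xi_2^*)\,|\xi_2^*|}\lesssim N^{-2+}.
\end{equation*}
This mean-value / cancellation estimate is exactly the type proved in \cite{CKSTT4} and \cite{LG6}; the factor $|\xi_1^*\xi_2^*|$ in the denominator is the one I distribute (via $m(\xi)\langle\xi\rangle$) onto the two highest-frequency factors so that each factor in the multilinear integral can be replaced by $\langle\partial_x\rangle Iu$. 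After this reduction I would bound the integral by placing the two top-frequency factors in a bilinear $L^2_{x,t}$ estimate through Lemma \ref{L3}, noting the frequency-gap hypothesis $\max\{|\xi_1|,|\xi_2|\}\lesssim\min\{|\xi_1-\xi_2|,|\xi_1+\xi_2|\}$ is forced since the two largest frequencies must be comparable and of opposite sign on $\Gamma_{k+2}$, and the remaining $k$ low-frequency factors in $L^{5k/4+}_xL^{5k/2+}_t$ via \eqref{LI2}, converting everything to $\|Iu\|_{X^\delta_{1,\frac12+}}$ and collecting the factor $N^{-2+}$. This produces the $\|Iu\|_{X^\delta_{1,\frac12+}}^{k+2}$ term.

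Next I would treat the $\Lambda_{2k+2}$ term, whose multiplier is $m_1\cdots m_{k+1}\,m(\xi_{k+2}+\cdots+\xi_{2k+2})(\xi_{k+2}+\cdots+\xi_{2k+2})$. Here the smoothing is bought differently: the derivative $(\xi_{k+2}+\cdots+\xi_{2k+2})$ is absorbed onto the factor carrying $m(\xi_{k+2}+\cdots+\xi_{2k+2})$, and one again gains $N^{-2+}$ by comparing the product $m_1\cdots m_{k+1}m(\cdots)$ against $\prod m(\xi_j^*)\langle\xi_j^*\rangle$, exploiting that at least one frequency must exceed $N$. The resulting $(2k+2)$-linear integral is estimated by Hölder, distributing the factors among the Strichartz spaces $L^8_{x,t}$, $L^p_{x,t}$ from \eqref{LP}, and $L^{5k/4+}_xL^{5k/2+}_t$ from \eqref{LI2}, with the top-frequency pair again handled bilinearly via Lemma \ref{L3} to recover the extra derivatives cheaply; this yields the $\|Iu\|_{X^\delta_{1,\frac12+}}^{2k+2}$ term. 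The hypothesis $s>1/2$ enters precisely to guarantee that $b=\frac12+$ can be taken and that the Hölder exponents close; the constant is uniform in $\delta$ because all norms are the $X^\delta_{s,b}$ (restricted) norms. The main obstacle I anticipate is item (i): establishing the pointwise $N^{-2+}$ bound with the cancellation, since one must carefully case-split on the relative sizes of the frequencies (all small, exactly one large, two or more large) and in each case verify that the numerator's vanishing on $\Gamma_{k+2}$ combines with the denominator $|\xi_1^*\xi_2^*|$ to yield the claimed decay; once this multiplier bound is in hand, the remaining steps are routine applications of the Strichartz and bilinear estimates already recorded above.
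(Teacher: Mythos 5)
Your overall architecture (split \eqref{EMC1} into the $\Lambda_{k+2}$ and $\Lambda_{2k+2}$ contributions, case-split on dyadic frequencies, exploit the cancellation in the symmetrized multiplier, then close with the bilinear Airy estimate of Lemma \ref{L3} and the Strichartz bounds) is the same as the paper's. However, the step you single out as ``the key pointwise claim'' is false. You assert
\begin{equation*}
\frac{\bigl|m_1^2\xi_1^3+\cdots+m_{k+2}^2\xi_{k+2}^3-m_1\cdots m_{k+2}(\xi_1^3+\cdots+\xi_{k+2}^3)\bigr|}
{m(\xi_1^*)\,|\xi_1^*|\,m(\xi_2^*)\,|\xi_2^*|}\lesssim N^{-2+}.
\end{equation*}
Take $\xi_1=R\gg N$, $\xi_2=-(R+k)$, $\xi_3=\cdots=\xi_{k+2}=1$. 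Then $m_3=\cdots=m_{k+2}=1$ and the numerator equals $(m_1-m_2)(m_1\xi_1^3-m_2\xi_2^3)+(1-m_1m_2)k\approx 2(1-s)k\,m(R)^2R^2$, while your denominator is $\approx m(R)^2R^2$; the ratio is $O(1)$, not $O(N^{-2+})$. The point is that the symbol carries three derivatives ($\xi_1^3$) but the mean-value cancellation only buys one factor of $N_3^*/N_2^*$; against a denominator containing only \emph{two} first-order weights there is nothing left over. The actual gain of $N^{-2+}$ in the paper is not pointwise: it is assembled from the normalized symbol bound $\bigl|\tfrac{m(\xi_2+\cdots+\xi_{k+2})}{m_2\cdots m_{k+2}}-1\bigr|\lesssim N_3/N_2$ \emph{together with} the full derivative gained by each application of Lemma \ref{L3} and the $\langle\xi\rangle^{-1}$ weights of the $X_{1,\frac12+}$ norms on several factors --- effectively a denominator of size $m(N_1)m(N_2)N_1^2N_2^2$. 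Your two-weight denominator is short by precisely the factor $N_1N_2\gtrsim N^2$ you are trying to gain, so the pointwise claim begs the question.

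A second concrete error: you propose to put ``the two top-frequency factors'' into the bilinear $L^2_{x,t}$ estimate of Lemma \ref{L3}, arguing that its hypothesis holds because the two largest frequencies are comparable and of opposite sign. That is exactly the configuration in which the hypothesis \emph{fails}: if $\xi_1\approx-\xi_2$ then $|\xi_1+\xi_2|$ is small, so $\max\{|\xi_1|,|\xi_2|\}\lesssim\min\{|\xi_1-\xi_2|,|\xi_1+\xi_2|\}$ is violated. This is why the paper pairs each high-frequency factor with a \emph{low}-frequency one ($I\phi_1I\phi_3$ and $I\phi_2I\phi_4$), for which $|\xi_1\pm\xi_3|\sim|\xi_1|$ and the lemma applies, yielding the two extra derivative gains that make the count above close. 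With the corrected symbol bound and the corrected bilinear pairing (and the remaining $k-2$ factors in $L^\infty_{x,t}$ via Sobolev, or in $L^{5k/4+}_xL^{5k/2+}_t$ as you suggest), your outline for both the $\Lambda_{k+2}$ and $\Lambda_{2k+2}$ terms would match the paper's proof.
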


\begin{remark}
The exponent $-2+$ on the right hand side of (\ref{CC}) is directly tied to the restriction $s>\dfrac{3+2(1/2-2/k)}{5}$
in our main theorem. If one could replace the increment $N^{-2+}$ by $N^{-\alpha+}$ for some $\alpha>0$ the argument we
give in Section \ref{Global} implies global well-posedness of (\ref{gkdv}) for all $s>\dfrac{3+\alpha(1/2-2/k)}{3+\alpha}$.
\end{remark}
\begin{proof} We start with the estimate for the $\Lambda_{k+2}$ term. Instead of estimating each multilinear expression
separately, we shall exploit some cancelation between the two multipliers. Using symmetrization and the fact that
$\xi_1+\cdots+\xi_{k+2}=0$ this term can be rewritten as
\begin{equation*}
\Lambda_{k+2}((m_1^2\xi_1^3+\dots+m_{k+2}^2\xi_{k+2}^3)-m_1\dots m_{k+2}(\xi_1^3+\dots+\xi_{k+2}^3))
\end{equation*}
\begin{equation*}
=(k+2)\int_{\ast} \left( \frac{m(\xi_2+\cdots+\xi_{k+2})}{m(\xi_2)\cdots m(\xi_{k+2})}-1\right) \xi_1^3\widehat{{Iu(\xi_1)}}
\cdots
 \widehat{Iu(\xi_{k+2})},
\end{equation*}
where $\ast$ denotes integration over $\xi_1+\cdots+\xi_{k+2}=0$.

Therefore, our aim is to obtain the following inequality
\begin{equation*}
\mathbf{Term}\lesssim N^{-2+}\prod_{i=1}^{k+2}\left\|I\phi_i\right\|_{X^{\delta}_{1,\frac{1}{2}+}},
\end{equation*}
where
\begin{equation*}
\mathbf{Term}\equiv \left|\int_0^{\delta}\!\!\!\int_{\ast}  \left( \frac{m(\xi_2+\cdots+\xi_6)}{m(\xi_2)\cdots m(\xi_6)}-1\right)
\xi_1^3 \widehat{{I\phi_1(\xi_1)}}
 \cdots
 \widehat{I\phi_{k+2}(\xi_{k+2})}\right|.
\end{equation*}

We estimate $\mathbf{Term}$ as follows. Without loss of generality, we assume that the Fourier transforms of
all these functions are non-negative. First, we bound the symbol in the parentheses pointwise
in absolute value, according to the relative sizes of the frequencies involved. After that, the
remaining integrals are estimated using Plancherel formula, H\"older's inequality and Lemma \ref{L3}.
To sum over the dyadic pieces at the end we need to have extra factors
$N_j^{0-}$, $j=1,\dots,k+2$, everywhere.

We decompose the frequencies $\xi_j$, $j=1,\dots,k+2$, into dyadic blocks $N_j$. By the symmetry
of the multiplier
\begin{equation}\label{MULT}
\frac{m(\xi_2+\cdots+\xi_{k+2})}{m(\xi_2)\cdots m(\xi_{k+2})}-1
\end{equation}
in $\xi_2$, \dots, $\xi_{k+2}$, we may assume that
\begin{equation*}
N_2\geq \cdots \geq N_{k+2}.
\end{equation*}

Moreover, we can assume
$N_2 \gtrsim N$, because otherwise the symbol is zero. The condition $\sum_{i=1}^{k+2}\xi_i=0$
implies $N_1\lesssim N_2$. We split the different frequency interaction into several cases,
according to the size of the parameter $N$ in comparison to the $N_i$'s.\\

\quebra \textbf{Case $A$: }$N_2\gtrsim N\gg N_3\geq \cdots \geq N_{k+2}$.\\

The condition $\sum_{i=1}^{k+2}\xi_i=0$ implies $N_1\sim N_2$. By the mean value theorem,
\begin{equation*}
\left|\frac{m(\xi_2)-m(\xi_2+\cdots+\xi_{k+2})}{m(\xi_2)}\right|\lesssim
\frac{\left|\nabla m(\xi_2)(\xi_3+\cdots+\xi_{k+2})\right|}{m(\xi_2)}\lesssim\frac{N_3}{N_2}.
\end{equation*}

Therefore, Lemma \ref{L3} and the Sobolev embedding  imply that
\begin{eqnarray*}
{\mathbf{Term}} \!\!\!&\lesssim& \!\!\! \frac{N_1^3N_3}{N_2}\left\|I\phi_1I\phi_3\right\|_{L^2(\R\times[0,\delta])}
\left\|I\phi_2I\phi_4\right\|_{L^2(\R\times[0,\delta])} \prod_{i=5}^{k+2} \left\|I\phi_i\right\|_{L^{\infty}}\\
&\lesssim&  \!\!\!\frac{N_1^3N_3}{N_2N_1N_2N_1N_2\langle N_3 \rangle\langle N_4 \rangle
\prod_{i=5}^{k+2} \langle N_i \rangle^{1/2-}} \prod_{i=1}^{k+2} \|I\phi_i\|_{X^{\delta}_{1,\frac{1}{2}+}}\\
&\lesssim& \!\!\! N^{-2+}N_{max}^{0-}\prod_{i=1}^{k+2} \|I\phi_i\|_{X^{\delta}_{1,\frac{1}{2}+}}.
\end{eqnarray*}

The remaining cases $N_2\gg N_3\gtrsim N$ and $N_3\geq \cdots \geq N_6$ (\textbf{Case $B$}) and
$N_2\sim N_3\gtrsim N$ and $N_3\geq \cdots \geq N_6$ (\textbf{Case $C$}) can be done using the same arguments as
in Farah \cite{LG6} (just put the remaining terms $I\phi_j$, $j=7,\cdots, k+2$ in $L^{\infty}_{x,t}$ and apply the
Sobolev embedding).

Now we turn to the estimate of the $\Lambda_{2k+2}$ term. Before we start let us fix some notation.
We write $N_1^{\ast}\geq N_2^{\ast}\geq N_3^{\ast}$ for the highest, second highest and third highest values of the
frequencies $N_1,\dots,N_{2k+2}$. It is clear that
\begin{equation}\label{B10}
|m_1\dots m_{k+1}m(\xi_{k+2}+\cdots+\xi_{2k+2})(\xi_{k+2}+\cdots+\xi_{2k+2})|\lesssim N_1^{\ast}.
\end{equation}

Again we perform a Littlewood-Paley decomposition of the ten functions $u$.\\

 \textbf{Case $A$: }$N_1^{\ast}\sim N_2^{\ast}\sim N_3^{\ast} \gtrsim N. $\\

 In view of \eqref{B10} and the fact that $m^3(N_1^{\ast})N_1^{\ast3-}\gtrsim N^{3-}$, we have
  \begin{eqnarray*}
 \left|\int_T^{T+\delta}\Lambda_{2k+2}(m_1\dots m_{k+1}m(\xi_{k+2}+\cdots+\xi_{2k+2})(\xi_{k+2}+\cdots+\xi_{2k+2}))(t')dt'\right|
\end{eqnarray*}
 \vspace*{-0.25in}
 \begin{eqnarray*}
 &\lesssim& \dfrac{N^{\ast0-}_1}{N^{2-}}\int\!\!\!\int |JIu|^3|u|^{2k-1}\\
 &\lesssim&\dfrac{N^{\ast0-}_1}{N^{2-}}\|JIu\|_{L^8}^3\|u\|^{2k-1}_{8(2k-1)/5}\\
  &\lesssim&\dfrac{N^{\ast0-}_1}{N^{2-}}\|Iu\|_{X^{\delta}_{1,\frac{1}{2}+}}^3  \|u\|^7_{X^{\delta}_{\alpha(8(2k-1)/5),\frac{1}{2}+}},
 \end{eqnarray*}
where we have applied H\"older inequality, \eqref{L8} and \eqref{LP}.

Note that $\alpha(8(2k-1)/5)=(k-3)/(2k-1)+$. Therefore the inequality \eqref{smo} implies
$$\|u\|_{X^{\delta}_{\alpha(8(2k-1)/5),\frac{1}{2}+}}\lesssim \|Iu\|_{X^{\delta}_{1,\frac{1}{2}+}},$$
for all $s>(k-3)/(2k-1)$ (note that $(k-3)/(2k-1)< 1/2$).

So, in this case
\begin{equation*}
\begin{split}
\Bigg|\int_T^{T+\delta}\Lambda_{2k+2}(m_1\dots &m_{k+1}m(\xi_{k+2}+\cdots+\xi_{2k+2})(\xi_{k+2}+\cdots+\xi_{2k+2}))(t')dt'\Bigg|\\
&\lesssim \dfrac{N^{\ast0-}_1}{N^{2-}}\|Iu\|_{X^{\delta}_{1,\frac{1}{2}+}}^{2k+2}.
\end{split}
 \end{equation*}

 \textbf{Case $B$: }$N_1^{\ast}\sim N_2^{\ast}\gg N_3^{\ast}$.\\

 Let $u_j\equiv u(N_j)$. Again, the inequality $m^2(N_1^{\ast})N_1^{\ast2-}\gtrsim N^{2-}$ and \eqref{B10} implies that
 \begin{eqnarray*}
 \left|\int_T^{T+\delta}\Lambda_{2k+2}(m_1\dots m_{k+1}m(\xi_{k+2}+\cdots+\xi_{2k+2})(\xi_{k+2}
 +\cdots+\xi_{2k+2}))(t')dt'\right|
\end{eqnarray*}
 \vspace*{-0.25in}
 \begin{eqnarray*}
&\lesssim& \dfrac{N^{\ast0-}_1}{N^{1-}}\|JIu_1u_3\|_{L^2}\|JIu_2\prod_{j=4}^{2k+2}u_j\|_{L^2}\\
&\lesssim& \dfrac{N^{\ast0-}_1}{N^{2-}}\|JIu_1\|_{L^2}\|u_3\|_{L^2}\|JIu_2\|_{L^8}\|u\|^{2k-1}_{L^{8(2k-1)/3}}\\
&\lesssim&\dfrac{N^{\ast0-}_1}{N^{2-}}\|Iu\|_{X^{\delta}_{1,\frac{1}{2}+}}^{2k+2},
 \end{eqnarray*}
where we have applied H\"older inequality, Lemma \ref{L3}, \eqref{L8} and \eqref{LP} with
$\alpha(8(2k-1)/3)=(k-2)/(2k-1)+<1/2$. This concludes the proof of Proposition \ref{p4.1}.
\end{proof}

\subsection{Proof of Theorem \ref{T1}}\label{Global}

Before proceeding to the proof of Theorem \ref{T1} we will first establish local well-posedness for the generalized
KdV equation \eqref{gkdv} in the Bourgain spaces $X_{s,b}$. As in Theorem \ref{local}, by the Duhamel's principle,
we need to find a solution for the following integral equation
$$u(t)=U(t)u_0+\int^t_0 U(t-s)\partial_x(u^{k+1})(s)ds.$$

The proof proceeds by the usual fixed point argument. By well-known linear estimates, we have for all $s>s_k$
\begin{eqnarray}
\left\| u\right\|_{X_{s,1/2+}^I} &=& \left\| U(t)u_0+\int^t_0 U(t-s)\partial_x(u^{k+1})(s)ds\right\|_{X_{s,1/2+}^I}\\
&\lesssim& \left\| u_0\right\|_{X_{s,1/2+}^I}+T^{\varepsilon}\left\| \partial_x(u^{k+1})\right\|_{X_{s,-1/2++}^I},
\end{eqnarray}
for sufficiently small $\varepsilon>0$.

Thus, the crucial nonlinear estimate for the local existence is given in the following lemma.
\begin{lemma}\label{NLEL}
For $s>s_k=(k-4)/2k$ we have
\begin{equation}\label{NLE}
\|\partial_x(u^{k+1})\|_{X_{s,-\frac12++}}\lesssim \|u\|^{k+1}_{X_{s,\frac12+}}.
\end{equation}
\end{lemma}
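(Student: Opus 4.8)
The plan is to prove the nonlinear estimate \eqref{NLE} via duality, reducing it to a multilinear integral estimate on the Fourier side which is then handled using the Bourgain-space Strichartz estimates collected in the preliminaries. Writing $v=u^{k+1}$, I first observe that $\partial_x$ contributes a factor $|\xi|$ on the Fourier side, where $\xi=\xi_1+\dots+\xi_{k+1}$ is the output frequency. Pairing with a test function $w\in X_{-s,\frac12--}$ (the dual space of $X_{s,-\frac12++}$) via Plancherel, the estimate \eqref{NLE} is equivalent to bounding
\begin{equation*}
\Lambda \equiv \left|\int_{\xi=\sum\xi_j}\int_{\tau=\sum\tau_j} |\xi|\,\langle\xi\rangle^{-s}\langle\tau-\xi^3\rangle^{-\frac12--}\,
\widetilde{w}(\xi,\tau)\prod_{j=1}^{k+1}\widehat{u}_j\right|
\end{equation*}
by $\|w\|_{X_{-s,\frac12--}}\prod_j\|u\|_{X_{s,\frac12+}}$, where the $\widehat{u}_j$ carry the weights $\langle\xi_j\rangle^{s}\langle\tau_j-\xi_j^3\rangle^{\frac12+}$ implicitly absorbed into $X_{s,\frac12+}$ norms.

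The core of the argument is a frequency-localized case analysis. First I would perform a Littlewood--Paley decomposition, setting $|\xi_j|\sim N_j$ and ordering $N_1\geq N_2\geq\cdots\geq N_{k+1}$, with output frequency $|\xi|\sim N$. The derivative gain $|\xi|\leq c\,N_1$ combined with $\langle\xi\rangle^{-s}$ must be distributed so that the two highest frequencies absorb the loss; since $s>s_k=(k-4)/(2k)$, the Sobolev-type weights $\langle\xi_j\rangle^{s}$ on the remaining factors leave enough room to place the $k-1$ lowest-frequency factors into $L^\infty_{x,t}$ (controlled via \eqref{LP}, requiring $s>\alpha(p)$ for the relevant $p$) while the two largest frequencies are paired using the $L^8_{x,t}$ Strichartz estimate \eqref{L8}, or in the resonant high-high-to-high interaction using the refined bilinear estimate, Lemma \ref{L3}. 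The condition $\max\{|\xi_1|,|\xi_2|\}\lesssim\min\{|\xi_1\mp\xi_2|\}$ needed for Lemma \ref{L3} holds precisely in the dangerous regime where the two top frequencies have comparable magnitude but do not nearly cancel; when they do nearly cancel, the output frequency $N$ is small and the derivative loss is harmless. After bounding each dyadic piece, the extra negative powers $N_j^{0-}$ produced by working with $s$ strictly above $s_k$ guarantee summability of the dyadic series.

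The main obstacle I anticipate is the high-high interaction, where $N_1\sim N_2\gg N_3,\dots$, since here the derivative factor $|\xi|$ can be as large as $N_1$ yet the output frequency weight $\langle\xi\rangle^{-s}$ provides no compensation when $\xi_1$ and $\xi_2$ nearly cancel. This is exactly where the refined Strichartz estimate of Lemma \ref{L3} is essential: it allows one to place $\psi_1 D_x\psi_2$ in $L^2_{x,t}$ at the cost of only $X_{0,\frac12+}$ norms, thereby converting one derivative into no loss of $b$-regularity, which is unavailable from the plain $L^8$ estimate. I would check that in every subcase the total power of $N$ obtained is nonpositive (indeed strictly negative away from the scale-invariant threshold), confirming that the restriction $s>s_k$ is sharp for this scheme, and then sum the geometric dyadic series to close the estimate.
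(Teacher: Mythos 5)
Your strategy (dualize, Littlewood--Paley decompose, and close via $L^8$/$L^p$ Strichartz plus the bilinear refinement of Lemma \ref{L3}) is not the route the paper takes, and as sketched it has a concrete gap in the most dangerous frequency regime. First, you misstate when Lemma \ref{L3} is available: its hypothesis requires \emph{both} $|\xi_1-\xi_2|\gtrsim\max\{|\xi_1|,|\xi_2|\}$ and $|\xi_1+\xi_2|\gtrsim\max\{|\xi_1|,|\xi_2|\}$, so it fails not only in the antiparallel case $\xi_1\approx-\xi_2$ (where, as you say, the output frequency is small and the derivative is harmless) but also in the parallel case $\xi_1\approx\xi_2$ with the same sign. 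In that case the pair outputs at frequency $\sim 2N_1$, the derivative loss is maximal, and no bilinear gain between the two top factors is available; pairing one of them with the dual function $w$ instead is obstructed because $w$ only lives in $X_{-s,\frac12--}$, not $X_{0,\frac12+}$ as Lemma \ref{L3} requires. Second, the derivative count does not close with the tools you invoke: in the regime where all $k+1$ input frequencies and the output are comparable to $N$, the multiplier contributes $N^{1-ks}$, and placing $k-1$ factors in $L^\infty_{x,t}$ (or in $L^p_{x,t}$ via \eqref{LP}) costs roughly $N^{(1/2-s)+}$ per factor; already for $k=5$ and $s$ near $s_5=1/10$ one is left with a strictly positive power of $N$. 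The ingredient that actually absorbs the full derivative is the local smoothing norm $\|D^s_x\partial_x u\|_{L^\infty_xL^2_t}$, which your scheme never uses.

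The paper's proof avoids the dyadic case analysis entirely. It passes from $X_{s,-\frac12++}$ to the mixed Lebesgue norm $L^{5/4+}_xL^{10/9+}_t$ by the dual Strichartz estimate \eqref{L6}, applies the fractional Leibniz rule (Lemma \ref{lemma6}) to split $J^s\partial_x(u^{k+1})$ into a piece carrying $J^s\partial_x u$ in the smoothing norm $L^\infty_xL^2_t$ and pieces carrying $u$ in the maximal-type norms $L^{5k/4}_xL^{5k/2}_t$, and then returns to $X_{s,\frac12+}$ via the transferred Kenig--Ponce--Vega estimates \eqref{LI1}--\eqref{LI2} and the interpolation inequalities \eqref{LI11}--\eqref{LI22}. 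If you want to salvage your approach you would need, at a minimum, $X^{s,b}$ analogues of the smoothing and maximal estimates to handle the parallel high-high interaction and the all-comparable case; with only \eqref{L8}, \eqref{LP} and Lemma \ref{L3} the estimate does not reach $s>s_k$.
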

\begin{proof} By the fractional Leibniz rule in Lemma \ref{lemma6}, inequality \eqref{L6}, and H\"older inequality,
we obtain
\begin{equation*}
\begin{split}
\|\partial_x(u^{k+1})\|_{X_{s,-\frac12++}} &= \|J^s \partial_x(u^{k+1})\|_{X_{0,-\frac12++}}
\lesssim \|J^s \partial_x(u^{k+1})\|_{L_x^{5/4+}L_t^{10/9+}}\\
&\lesssim \|J^s(u^k)\|_{L_x^{p_1+}L_t^{q_1+}} \|\partial_xu\|_{L_x^{p_2}L_t^{q_2}}+
\|u^k\|_{L_x^{5/4+}L_t^{5/2+}} \|J^s\partial_xu\|_{L_x^{\infty}L_t^{2}}\\
&\lesssim \|u^{k-1}\|_{L_x^{5k/4(k-1)+}L_t^{5k/2(k-1)+}} \|J^su\|_{L_x^{p_3}L_t^{q_3}}
\|\partial_xu\|_{L_x^{p_2}L_t^{q_2}}\\
&+ \|u\|^{k-1}_{L_x^{5k/4}L_t^{5k/2}} \|u\|_{L_x^{5k/4+}L_t^{5k/2+}}\|J^s\partial_xu\|_{L_x^{\infty}L_t^{2}},
\end{split}
\end{equation*}
where $p_2$ and $p_3$ are defined as in \eqref{P2P3}.

Therefore, an application of inequalities \eqref{LI11} and \eqref{LI22} followed by inequalities \eqref{LI1} and \eqref{LI2}
yield the desired estimate \eqref{NLE}.
\end{proof}

\begin{remark}
As a consequence, one can recover all the well known range of existence for the local theory in terms of the $X_{s,b}$ spaces.
\end{remark}

Next, we consider the following modified equation
\begin{eqnarray}\label{MKdV}
\left\{
\begin{array}{l}
Iu_{t}+Iu_{xxx}+I(u^{k+1})_{x}=0, \peq  x\in \R,\, t>0,\\
Iu(x,0)=Iu_0(x).
\end{array} \right.
\end{eqnarray}

Clearly if $Iu\in H^1(\R)$ is a solution of \eqref{MKdV}, then $u\in H^s(\R)$ is a solution of \eqref{gkdv} in the same time interval.
Therefore, we need to prove that, in fact, the above modified equation has a global solution.

Applying the interpolation lemma (see \cite{CKSTT2}, Lemma 12.1) to \eqref{NLE}, we obtain
\begin{equation*}
\|\partial_xI(u^{k+1})\|_{X_{1,-1/2++}}\lesssim \|Iu\|^{k+1}_{X_{1,1/2+}}.
\end{equation*}
where the implicit constant is independent of $N$. Now, standard arguments invoking the contraction-mapping principle give the following
variant local well-posedness result.

\begin{theorem}\label{t3.2}
Assume $s_k<s< 1$. Let $u_0 \in H^s(\R)$ be
given. Then there exists a positive number $\delta$ such
that the IVP (\ref{MKdV}) has a unique local solution $Iu\in C([0,\delta]:H^1(\R))$ such that
\begin{equation}\label{CONT}
\|Iu\|_{X^{\delta}_{1,\frac{1}{2}+}}\lesssim \|Iu_0\|_{H^1}.
\end{equation}

Moreover, the existence time can be estimated by
\begin{equation*}
\delta \sim \dfrac{1}{\|Iu_0\|^{\sigma}_{H^1}},
\end{equation*}
where $\sigma>0$.
\end{theorem}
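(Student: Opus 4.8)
The plan is to solve the Duhamel formulation of \eqref{MKdV} by the contraction mapping principle, carried out at the $H^1$ level for the modified unknown, exactly as in the proof of Theorem \ref{local} but now in the Bourgain spaces. Since $I$ commutes with both $U(t)$ and $\partial_x$, applying $I$ to Duhamel's formula shows that a solution of \eqref{MKdV} is a fixed point of
$$
\Phi(u)(t):=U(t)u_0-\int_0^tU(t-s)\,\partial_x(u^{k+1})(s)\,ds,
$$
measured through the $I$-weighted norm. Accordingly, I would set up the iteration on the complete metric space
$$
Y^{\delta}=\left\{u:\|Iu\|_{X^{\delta}_{1,\frac{1}{2}+}}<\infty\right\},\qquad d(u,w)=\|I(u-w)\|_{X^{\delta}_{1,\frac{1}{2}+}},
$$
and on its closed balls.

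The two estimates that drive the scheme are already available. First, the linear estimate displayed just before Lemma \ref{NLEL}, applied at regularity $s=1$ to $Iu$, yields
$$
\|I\Phi(u)\|_{X^{\delta}_{1,\frac{1}{2}+}}\lesssim\|Iu_0\|_{H^1}+\delta^{\varepsilon}\|\partial_xI(u^{k+1})\|_{X^{\delta}_{1,-\frac{1}{2}++}}.
$$
Second, the interpolated version of \eqref{NLE} obtained via the interpolation lemma of \cite{CKSTT2}, namely $\|\partial_xI(u^{k+1})\|_{X_{1,-\frac{1}{2}++}}\lesssim\|Iu\|^{k+1}_{X_{1,\frac{1}{2}+}}$, whose implicit constant is \emph{independent of} $N$. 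Combining the two gives
$$
\|I\Phi(u)\|_{X^{\delta}_{1,\frac{1}{2}+}}\le c\|Iu_0\|_{H^1}+c\,\delta^{\varepsilon}\|Iu\|^{k+1}_{X^{\delta}_{1,\frac{1}{2}+}},
$$
with $c$ independent of $N$ and $\delta$.

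With this bound in hand the remaining steps are routine. I would put $R=2c\|Iu_0\|_{H^1}$ and restrict $\Phi$ to $B_R=\{u\in Y^{\delta}:\|Iu\|_{X^{\delta}_{1,\frac{1}{2}+}}\le R\}$; the estimate then reads $\|I\Phi(u)\|\le \frac{R}{2}+c\,\delta^{\varepsilon}R^{k+1}$, so $\Phi(B_R)\subset B_R$ as soon as $c\,\delta^{\varepsilon}R^{k}\le\frac12$, i.e. whenever
$$
\delta\lesssim R^{-k/\varepsilon}\sim\|Iu_0\|_{H^1}^{-k/\varepsilon}.
$$
This is exactly the claimed existence time, with $\sigma=k/\varepsilon>0$, and membership in $B_R$ gives the bound \eqref{CONT}. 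For the contraction I would establish the multilinear difference estimate
$$
\|\partial_xI(u^{k+1}-w^{k+1})\|_{X_{1,-\frac{1}{2}++}}\lesssim\big(\|Iu\|^{k}_{X_{1,\frac{1}{2}+}}+\|Iw\|^{k}_{X_{1,\frac{1}{2}+}}\big)\|I(u-w)\|_{X_{1,\frac{1}{2}+}},
$$
which follows line by line from the proof of Lemma \ref{NLEL} after writing $u^{k+1}-w^{k+1}=(u-w)\sum_{j=0}^{k}u^{j}w^{k-j}$ and distributing the fractional Leibniz rule of Lemma \ref{lemma6}. Together with the linear estimate this produces $d(\Phi(u),\Phi(w))\le 2c\,\delta^{\varepsilon}R^{k}\,d(u,w)$, so further shrinking $\delta$ (without changing the power of $\|Iu_0\|_{H^1}$) makes $\Phi$ a contraction. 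The Banach fixed point theorem then yields a unique $u$ with $Iu\in B_R$, and the embedding $X^{\delta}_{1,\frac{1}{2}+}\hookrightarrow C([0,\delta]:H^1(\R))$ gives $Iu\in C([0,\delta]:H^1(\R))$. I do not anticipate a genuine obstacle: once the $N$-independent nonlinear estimate is granted, the argument is entirely standard. The only point demanding care is to keep every implicit constant independent of $N$ throughout — in particular in the difference estimate — since the iteration of Section \ref{Global} crucially requires $\delta$ to depend on $\|Iu_0\|_{H^1}$ alone and not on the truncation parameter $N$.
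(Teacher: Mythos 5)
Your proposal is correct and follows essentially the same route as the paper: the paper likewise obtains the $N$-independent estimate $\|\partial_xI(u^{k+1})\|_{X_{1,-\frac12++}}\lesssim\|Iu\|^{k+1}_{X_{1,\frac12+}}$ by applying the interpolation lemma of \cite{CKSTT2} to \eqref{NLE}, and then invokes the standard contraction-mapping argument with the linear $X_{s,b}$ estimates, which is exactly the scheme you spell out (including the ball/contraction bookkeeping the paper leaves implicit).
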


Now, we have all tools to prove our global result stated in Theorem \ref{T1}.\\

\begin{proof}[Proof of Theorem \ref{T1}]Let $u_0 \in H^s(\R)$ with $s_k< s <1$. Our goal is to construct a solution to
\eqref{MKdV} (and therefore to \eqref{gkdv}) on an arbitrary time interval $[0,T]$.
We rescale the solution by writing $u_{\lambda}(x,t)=\lambda^{-2/k}u(x/\lambda,t/\lambda^3)$.
We can easily check that $u(x,t)$ is a solution of \eqref{gkdv} on the time interval $[0,T]$ if and only if
$u_{\lambda}(x,t)$ is a solution to the same equation, with initial data $u_{0,\lambda}=\lambda^{-2/k}u_0(x/\lambda)$,
on the time interval $[0,\lambda^3T]$.

Since $k$ is even we have $\int u^{k+2}(x,t)dx>0$, for all $t>0$, therefore for $\mu=-1$
\begin{equation}\label{H1B}
\|\partial_xIu_{\lambda}(t)\|^2_{L^2}\lesssim E(Iu_{\lambda})(t).
\end{equation}

On the other hand
\begin{eqnarray*}
E(Iu_{0,\lambda}) &\lesssim& \|\partial_xIu_{0,\lambda}\|^2_{L^2}+\|Iu_{0,\lambda}\|^{k+2}_{L^{k+2}}\\
&\lesssim& \left(N^{2(1-s)}\lambda^{-2(s-1/2+2/k)}+\lambda^{-(k+2)(2/k-1/k+2)}\right)\left(1+\|u_0\|_{H^s}\right)^{k+2}.
\end{eqnarray*}
where in the last inequality we have used that
\begin{equation}\label{IU01}
\|\partial_xIu_{0,\lambda}\|_{L^2}\lesssim N^{1-s}\||\xi|^s\widehat{u_{0,\lambda}}\|_{L^2}
=N^{1-s}\lambda^{-(s-1/2+2/k)}\|u_0\|_{\dot{H}^s}.
\end{equation}
and, by Sobolev embedding,
\begin{equation}\label{IU02}
\|Iu_{0,\lambda}\|_{L^{k+2}} \lesssim \|D^{1/2-1/k+2}Iu_{0,\lambda}\|_{L^2}
\lesssim \lambda^{-(2/k-1/k+2)}\|u_0\|_{{H}^s},
\end{equation}
for all $s>1/2-1/k+2$.

Now, we apply our variant local existence Theorem \ref{t3.2} on $[0, \delta]$, where
$\delta\sim \|Iu_{0,\lambda}\|^{-\sigma}_{H^1}$,
 $\sigma>0$, to conclude that
\begin{equation}\label{IUV}
\|Iu_{\lambda}\|_{X^{\delta}_{1,\frac{1}{2}+}}\lesssim \|Iu_{0,\lambda}\|_{H^1}.
\end{equation}

The choice of the parameter $N=N(T)$ will be made later, but we select $\lambda$ now by requiring
\begin{eqnarray*}
N^{2(1-s)}\lambda^{-2(s-1/2+2/k)}\left(1+\|u_0\|_{H^s}\right)^{k+2} <1\Longrightarrow \lambda
\sim N^{\frac{1-s}{s-1/2+2/k}}.
\end{eqnarray*}

\begin{remark}
Note that $2/k-1/k+2>0$.
\end{remark}

From now on, we drop the $\lambda$ subscript on $u$. By the almost conservation law stated in Proposition \ref{p4.1} and
\eqref{IU01}-\eqref{IUV}, we have
\begin{equation*}
E^1(1)\leq E^1(0)+cN^{-2+}<1+cN^{-2+}<2.
\end{equation*}
We iterate this process $M$ times obtaining
\begin{equation}\label{UB}
E^1(M)\leq E^1(0)+cMN^{-2+}<1+cMN^{-2+}<2,
\end{equation}
as long as $MN^{-2+}\lesssim 1$, which implies that the lifetime of the local results remains uniformly of size $1$.
We take $M\sim N^{2-}$.
This process extends the local solution to the time interval $[0,N^{2-}]$. Now, we choose $N=N(T)$ so that
$$N^{2-}>\lambda^3T\sim N^{3\left(\frac{1-s}{s-1/2+2/k}\right)}T\Longrightarrow N^{2-3\frac{1-s}{s-1/2+2/k}-}>T.$$
Therefore, if $s>\dfrac{4(k-1)}{5k}$ then $T$ can be taken arbitrarily large which conclude our global result.

Finally, we need to establish the polynomial bound (\ref{pb}). Undoing the scaling, we have
\begin{equation*}
\|\partial_xIu_{\lambda}(\lambda^3T_0)\|^2_{L^2}=\frac{1}{\lambda^{1+4/k}}\|\partial_xIu(T_0)\|^2_{L^2}.
\end{equation*}

Let $T_0\sim N^{2-3\frac{1-s}{s-1/2+2/k}-}$, therefore our uniform bound \eqref{UB} together with \eqref{smo},
\eqref{MC} and \eqref{H1B} imply
\begin{eqnarray*}
\|u(T_0)\|^2_{H^{s}}\lesssim \|Iu(T_0)\|^2_{H^{1}}&\lesssim& \|Iu(T_0)\|^2_{L^{2}}+ \|\partial_xIu(T_0)\|^2_{L^{2}}\\
&\lesssim& \|u_0\|^2_{L^{2}}+ \lambda^{1+4/k}\|\partial_xIu_{\lambda}(\lambda^3T_0)\|^2_{L^{2}}\\
&\lesssim& \|u_0\|^2_{L^{2}}+ N^{(1+4/k)\left(\frac{1-s}{s-1/2+2/k}\right)}\\
&\lesssim& (1+T_0)^{\frac{(1+4/k)(1-s)}{5s-4(k-1)/k}+}(1+\|u_0\|_{H^{s}})^2.
\end{eqnarray*}
The proof of Theorem \ref{T1} is thus completed.
\end{proof}

\begin{remark}\label{KODD}
It is not clear how to apply the $I$-method when $\mu=1$ or $\mu=-1$ and $k$ odd. In this case, we may not have
inequality \eqref{H1B}. Therefore, to perform the interactions explained above we need to verify the hypotheses
of Theorem \ref{global3} for the modified solution $Iu(t)$ at each step. However, the only available estimate in
the homogeneous $H^1$-Sobolev space is the following
\begin{equation*}
\|\partial_xIu_{t}\|_{L^2}\lesssim N^{1-s}\|u(t)\|_{\dot{H}^s}.
\end{equation*}
Since, at the end of the argument we need to take $N$ large, we cannot satisfy the inequalities \eqref{GR1}-\eqref{GR3}
during all the interactions.
\end{remark}


\bibliographystyle{mrl}

\end{document}